\documentclass[onecolumn]{IEEEtran}
\usepackage{amsmath,amsfonts,amssymb,euscript, graphicx, 
epsfig,
enumerate,float,afterpage, subfigure, ifthen}%

\usepackage{url}
\usepackage{hyperref}

\newtheorem{thm}{Theorem}

\newtheorem{lem}{Lemma}

\newcommand{\expect}[1]{\mathbb{E}\left[#1\right]}

\newcommand{\norm}[1]{||{#1}||}

\newcommand{\script}[1]{{{\cal{#1} }}}

\begin{document}

\title{Adaptive Utility Optimization for Stochastic Renewal Systems}

\author{Michael J. Neely \\ University of Southern California\\ \url{https://viterbi-web.usc.edu/~mjneely/}
}

%\markboth{Neely 2021}{Neely}

\maketitle

\begin{abstract} 
This paper treats a system that performs back-to-back tasks. Each new task has a random matrix of parameters and can be processed in various processing modes. The processing mode determines the task duration and a vector of rewards (penalties are viewed as negative rewards). The reward vector can include quality, profit, or $-1$ times a penalty such as 
cost or energy.   In wireless applications, tasks correspond to data transmissions and rewards/penalties  correspond to throughput and distortion. Prior work treats the problem of maximizing the time average of a reward process subject to constraints. This paper seeks to   maximize a concave function of the time average reward vector. This is difficult because it involves a multi-variable function of ratios of averages. We develop a single algorithm that does not know the probability distribution of the random task matrices. The algorithm is adaptive, meaning it responds efficiently (with optimal adaptation time) when probabilities change.
\end{abstract}

\section{Introduction}

\begin{figure}[htbp]
   \centering
   \includegraphics[height=1in]{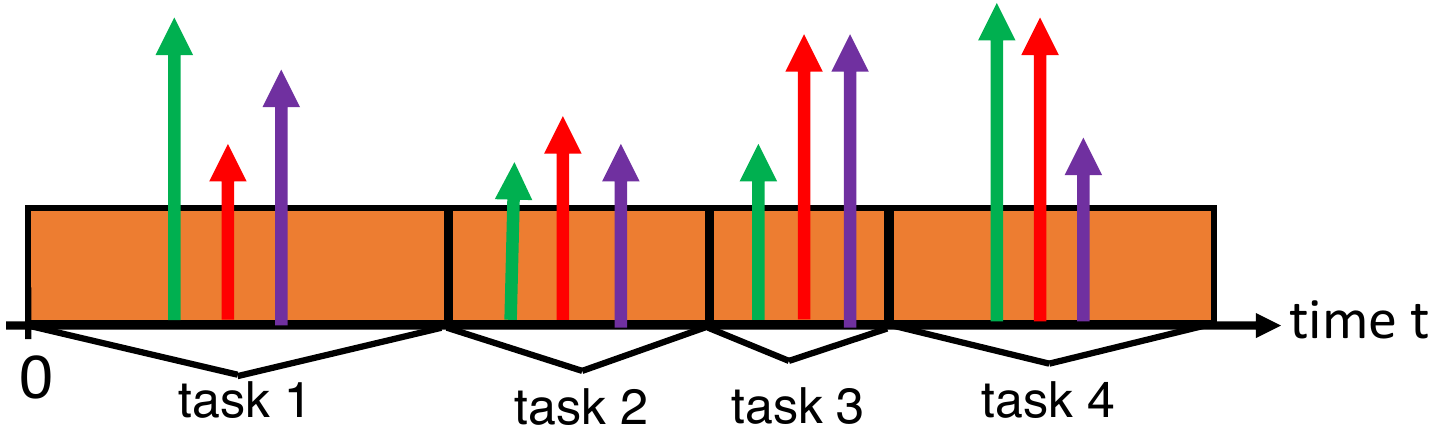} % requires the graphicx package
   \caption{An example timeline with variable-duration tasks. The green, red,  and purple arrows for each task represent profit, energy expenditure, and quality.}
   \label{fig:tasks}
\end{figure}

Fix $n$ as a positive integer. Consider a system that performs back-to-back tasks over continuous 
time $t \in [0, \infty)$. Label the tasks $k \in \{1, 2, 3, ...\}$. A new task is started once the previous task is finished (see Fig. \ref{fig:tasks}). At the start of each new task $k$, the system controller observes a random matrix $M[k]$ of parameters that describe the task.  The controller then chooses one of multiple task processing options. The option chosen, together with the task matrix $M[k]$, determine the task duration $T[k]$ (in units of time) and a vector of 
rewards $R[k]=(R_1[k], ..., R_n[k])$.  Rewards can relate to profit and quality. Negative rewards can relate to penalties such as energy expenditure, distortion, and cost. 

 Specifically, each random matrix $M[k]$ has size $S[k]\times (n+1)$, where $S[k]$ is a random positive integer. 
 The value $S[k]$ represents the number of rows of matrix $M[k]$, which is also the number of processing options 
 available for that task. Choosing a processing option for task $k$ 
 is equivalent to choosing a row of matrix $M[k]$.   Specifically, for 
 each $i \in \{1, ..., S[k]\}$, row $i$ of matrix $M[k]$ has the form 
 $$ \left[\begin{array}{ccccc} 
 T_i[k]& R_{i,1}[k] & R_{i,2}[k] & \cdots & R_{i,n}[k]
 \end{array}\right]$$
 where $T_i[k]$ is a positive-valued random variable that represents the task duration if processing option $i \in \{1, ..., S[k]\}$ is chosen; $(R_{i,1}[k], ..., R_{i,n}[k])$ is a random vector in $\mathbb{R}^n$ that represents the rewards earned under processing option $i$.  
 
\subsection{Example task matrix} 

  An example matrix for task $k$ is
 \begin{equation*}  
M[k]=\begin{array}{ccccc} 
\mbox{duration} & \mbox{profit} & \mbox{energy} & \mbox{quality} &  \\
5.1 & 3.6 & -0.3 & 2.0 & \mbox{alg 1}\\
10.2 & 2.8 & -0.7 & 3.0 & \mbox{alg 2}\\
2.7 & 3.0 & -0.2 & 1.0 & \mbox{alg 3}
\end{array}
\end{equation*} 
In this example we have $n=3$ and three rewards for profit, energy, and quality. The energy column is multiplied by $-1$ because minimizing energy is equivalent to maximizing $-1$ times energy. This example 
matrix $M[k]$ has size $3\times 4$ and there are three task processing options, one for each of the three rows. Choosing algorithm 1 (row 1) implements a processing algorithm called ``alg 1'' that yields task duration $T[k]=5.1$ and reward vector $R[k]=(3.6, -0.3, 2.0)$.   The entire matrix is observed at the start of each task $k$. Thus, the duration and reward profile for each row is known and informs the choice of row.    However, future task matrices $M[k+1], M[k+2]$, and so on, are unknown.  This type of  scheduling problem (with an observed $M[k]$) is broadly classified as \emph{opportunistic scheduling}. 
 
 \subsection{Time average reward} 
 
 Let $row[k] \in \{1, ..., S[k]\}$ denote the decision of the controller for task $k$. Then 
 \begin{align*}
 T[k] &= T_{row[k]}[k] \\
 R[k] &= (R_{row[k], 1}[k], \cdots , R_{row[k], n}[k])
 \end{align*}
 
 The time average reward vector per unit time, averaged over the first $m$ tasks, is 
 $$ \frac{\sum_{k=1}^m R[k]}{\sum_{k=1}^m T[k]} = \frac{\overline{R}[m]}{\overline{T}[m]}$$
 where we define 
 \begin{align}
 \overline{R}[m] &= \frac{1}{m}\sum_{k=1}^m R[k] \quad \forall m \in \{1, 2, 3, ...\} \label{eq:rbar}\\
 \overline{T}[m] &= \frac{1}{m}\sum_{k=1}^m T[k] \quad \forall m \in \{1, 2, 3, ...\}\label{eq:tbar}
 \end{align}

 \subsection{Optimal utility and the block i.i.d. model} 
 
 First assume the $M[k]$ matrices are i.i.d. over $k \in \{1, 2, 3, ...\}$ 
 (the entries of each $M[k]$ matrix can be correlated).  The controller  observes the components of each new $M[k]$ matrix at the start of task $k$, before making a processing mode decision for 
 that task. 
Let $\psi:\mathbb{R}^n\rightarrow \mathbb{R}$ be a concave and entrywise nondecreasing 
utility function, such as the $\alpha$-fair utility functions in \cite{mo-walrand-fair} or the proportionally-fair utility functions in \cite{kelly-charging}\cite{kelly-shadowprice}. 
The utility function can be nonsmooth.  First consider the infinite horizon problem of choosing processing modes over time to solve 
\begin{align}
&\mbox{Maximize:} \quad \liminf_{m\rightarrow\infty}\psi\left( \frac{\overline{R}[m]}{\overline{T}[m]}\right) \label{eq:p1}\\
&\mbox{Subject to:} \quad row[k] \in \{1, ..., S[k]\} \quad \forall k \in \{1, 2, 3, ....\}\label{eq:p2}
\end{align}

The problem \eqref{eq:p1}-\eqref{eq:p2} is 
a \emph{renewal optimization problem} \cite{sno-text}\cite{renewal-opt-tac}. 
Let $\psi^*$ denote the optimal utility for this problem, which 
depends on the distribution of the matrices $M[k]$.  The $\psi^*$ value optimizes over all possible algorithms for choosing a row for each task $k \in \{1, 2, 3, ...\}$, including algorithms that have 
knowledge of the probability distribution of the random matrices.  

A naive approach is to choose 
a row of $M[k]$ for each task $k$ that greedily maximizes $\psi(R[k]/T[k])$, which can be viewed as an instantaneous utility associated with task $k$.  This is generally suboptimal, even when $n=1$ and the utility function is linear (see examples in  \cite{neely-renewal-jmlr}).  In general, an optimal solution must learn an optimal \emph{conditional distribution} for choosing a given row $i \in \{1, ..., S[k]\}$ given the observed matrix $M[k]$.  The solution space is enormous. The optimal conditional distribution depends on the distribution $\mu_M$ on the random matrix $M[k]$, where $\mu_M$ describes the distribution on the number of rows $S[k]$  and the corresponding joint distribution across all entries of the $S[k]\times (n+1)$ matrix $M[k]$.    

The number of parameters that describes $\mu_M$ can be very large. Thus, accurate 
estimation of $\mu_M$ is typically impossible to do over a reasonable timescale. Nevertheless, we want an algorithm that does not know the distribution $\mu_M$, but learns to make optimal decisions in a timely manner.  For this, rather than estimate $\mu_M$, we use time-varying weights that evolve according to sampled functionals of the distribution. This is similar to the classic max-weight methodology in \cite{tass-radio-nets}\cite{tass-server-allocation} and the drift-plus-penalty paradigm in \cite{sno-text} which treat various types of opportunistic scheduling problems.   However, the  fractional and multi-objective structure of the current problem requires a new method that we develop in this paper.

We want a control algorithm that approaches the optimal utility $\psi^*$ as quickly as possible \emph{without knowing the probability distribution} for the random matrices. 
 Further, we want the algorithm to be \emph{adaptive} so that it learns to make new decisions when probability distributions change. 
For this, we modify the assumption that $M[k]$ is i.i.d. over all $k \in \{1, 2, 3, ...\}$ to a \emph{block i.i.d.} model where
matrices are i.i.d. over successive blocks of tasks, but the distribution can change arbitrarily from one block to the next. Further, the start and end times of each block are unknown. Specifically, for 
positive integers $m_1, m_2$ with $m_1<m_2$ define 
\begin{align*}
\overline{R}[m_1, m_2] &= \frac{1}{m_2-m_1+1} \sum_{k=m_1}^{m_2} R[k]\\
\overline{T}[m_1, m_2] &= \frac{1}{m_2-m_1+1} \sum_{k=m_1}^{m_2} T[k]
\end{align*}
Fix $\epsilon>0$. We develop an algorithm that can be implemented over all tasks $k \in \{1, 2, 3, ...\}$ and
that has the following property: 
For any block of tasks $\{m_1, ..., m_2\}$ with  size  greater than or equal to $1/\epsilon^2$, our algorithm satisfies 
\begin{equation} \label{eq:adaptation-time} 
\psi\left( \frac{\expect{\overline{R}[m_1, m_2]}}{\expect{\overline{T}[m_1,m_2]}}\right) \geq \psi^* - O(\epsilon)
\end{equation} 
regardless of the probability law for $M[k]$ for $k \in \{1, ..., m_1-1\}$. Thus, the \emph{adaptation time} is $O(1/\epsilon^2)$. 

It is known that achieving $O(1/\epsilon^2)$ \emph{adaptation time} is more difficult than achieving $O(1/\epsilon^2)$ \emph{convergence time}, where convergence time only requires the guarantee \eqref{eq:adaptation-time} to be 
achieved by a block that starts at task $1$, so $m_1=1$. This is much easier because the block starts at a known time, so algorithms can be configured for this known starting time.  For example, a Robbins-Monro algorithm for a similar renewal optimization problem in \cite{neely-renewal-jmlr} uses a \emph{diminishing stepsize} parameter that can be initialized at task $1$. In general, diminishing stepsize algorithms cannot be used for the block i.i.d. model because the start times of new blocks are unknown, so it is unclear when to reset the stepsize parameter. 

The $O(1/\epsilon^2)$ adaptation time that we achieve in this paper is \emph{optimal}. Indeed, prior work in \cite{neely-renewal-jmlr} proves a converse bound on convergence time of $\Omega(1/\epsilon^2)$ for the simpler problem of maximizing the ratio of averages $\expect{\overline{R}_1[m]}/\expect{\overline{T}[m]}$
for \emph{one renewal process} $(R_1[k], T[k])$ for $k \in \{1, 2, 3, ...\}$. In contrast, our current paper 
treats a vector of  $n$ renewal processes, together with a concave utility. Our method extends the recent work \cite{neely-adaptive-renewals} for constrained optimization of a single renewal process to the multi-reward case with a concave utility function. 

\subsection{Prior work} 

Renewal optimization problems of this type are introduced in
\cite{renewal-opt-tac} (see also \cite{sno-text}). The algorithm in
\cite{renewal-opt-tac} uses a drift-plus-penalty ratio method that makes a decision every new task that maximizes a ratio  of expectations $\expect{\Delta[k] +vp[k]}/\expect{T[k]}$, where $\Delta[k]$ represents the change in a Lyapunov function, $v$ is a positive weight, $p[k]$ is the penalty of interest for task $k$, and $T[k]$ is the task duration.  Direct implementation of the algorithm requires knowledge of the probability distribution. A heuristic method that does not need the probability distribution is also given in \cite{renewal-opt-tac}: That method approximates the optimal ratio of averages using a window of previous samples. 

Work in \cite{neely-renewal-jmlr} considers the simpler problem of 
maximizing  $\lim_{m\rightarrow\infty}\expect{\overline{R}_1[m]}/\expect{\overline{T}_1[m]}$ in a system with just one renewal process.  The converse bound on convergence time of $\Omega(1/\epsilon^2)$ is proven there. That prior work also provides a Robbins-Monro algorithm with a diminishing stepsize that achieves the $\Theta(1/\epsilon^2)$ bound. 
However, diminishing stepsize methods make increasing investments in a \emph{single} probability model and cannot adapt if probability distributions change.  The diminishing stepsize method of \cite{neely-renewal-jmlr} can be shown to have \emph{infinite adaptation time}. 

The current paper is most similar to our 
recent work  \cite{neely-adaptive-renewals} which considers a renewal problem of maximizing a time average reward subject to time average constraints. That work achieves an optimal $\Theta(1/\epsilon^2)$ adaptation time, but  does not have a concave utility function.  The current paper leverages the algorithm of \cite{neely-adaptive-renewals}, together with a simple virtual queue transformation,  to achieve an optimal adaptation time of $\Theta(1/\epsilon^2)$ for the utility-based renewal problem. The virtual queues in the current paper have the desirable property of being naturally bounded.  In contrast, the virtual queues 
of \cite{neely-adaptive-renewals} are not naturally bounded and a more complicated analysis based on truncating the queues is needed there. 

The fractional nature of our problem is similar to a \emph{linear fractional program}  \cite{fox-linear-fractional-mdp}\cite{boyd-convex}. Linear fractional programs can be transformed into convex programs via a nonlinear change of variables. 
Work in \cite{fox-linear-fractional-mdp} uses such a nonlinear transformation for offline construction of a Markov decision policy. The current paper treats online optimization of time averages. We cannot use the transformation methods of \cite{fox-linear-fractional-mdp}\cite{boyd-convex} because time averages are not preserved under nonlinear transformations. 

This paper leverages the powerful Lyapunov drift method developed for control of queueing networks in \cite{tass-radio-nets}\cite{tass-server-allocation}. Related opportunistic scheduling methods for fixed timeslot systems (without the renewal structure) are treated via stochastic optimimization 
\cite{shroff-opportunistic}, stochastic primal-dual  \cite{vijay-allerton02}\cite{prop-fair-down}\cite{atilla-primal-dual-jsac}\cite{stolyar-greedy}, and drift-plus-penalty \cite{neely-fairness-ton}\cite{neely-energy-it}\cite{sno-text}. Work in \cite{atilla-renewal-2019}  treats a different type of renewal optimization problem that relates to choosing arms in a multi-armed bandit system with rewards that arise after a random duration of time.  The adaptation concept considered in this paper is similar in spirit to the type of block guarantees available through fixed-stepsize online convex optimization \cite{online-convex}. Adaptive algorithms are also of interest in estimation \cite{robbins-monro-fixed-stepsize-sigmetrics} (where an algorithm that dynamically uses two fixed stepsizes is considered) and in convex bandit problems \cite{Luo2022AdaptiveBC} and \cite{haipeng-bandit1}.  Bandit problems and online convex optimization problems have a structure that is different from the opportunistic scheduling problem of the current paper.

\subsection{Notation} 

Vectors $x \in \mathbb{R}^n$ are treated as column vectors, so the transpose $x^{\top}$ is a row vector. We use $\norm{x}$ to denote the standard Euclidean norm: 
$$ \norm{x} = \sqrt{x^{\top}x}=\sqrt{\sum_{i=1}^n x_i^2}$$ 

\section{Preliminaries} 

\subsection{Variation on Jensen's inequality} 

Fix $n \in \mathbb{N}$. Let $D$ be a convex subset of $\mathbb{R}^n$.   Fix a probability space $(\Omega, \mathcal{F}, P)$. Let $X:\Omega \rightarrow\mathbb{R}^n$ be a random vector and suppose 
$$ X(\omega) \in D \quad \forall \omega \in \Omega$$
 Let $T:\Omega\rightarrow (0, \infty)$ be a positive-valued random variable (possibly correlated with $X$).  
The following variation on Jensen's inequality 
 is given in \cite{sno-text}. We present a different proof below. 

\begin{lem}  Let $\psi:D\rightarrow\mathbb{R}$ be a concave function. Suppose the expectations $\expect{T}$, $\expect{TX}$, $\expect{T\psi(X)}$  are finite (note that $0<\expect{T}<\infty$).  Then  
$$\frac{\expect{T\psi(X)}}{\expect{T}} \leq \psi\left(\frac{\expect{TX}}{\expect{T}} \right)$$
\end{lem}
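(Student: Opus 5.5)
The plan is to reduce the statement to the ordinary Jensen inequality under a change of probability measure. Define a new measure $Q$ on $(\Omega,\mathcal{F})$ by
$$ Q(A) = \frac{\expect{T 1_A}}{\expect{T}}, \qquad A \in \mathcal{F}. $$
Since $T>0$ and $0<\expect{T}<\infty$, $Q$ is a probability measure, and for any integrable $Y$ we have $\mathbb{E}_Q[Y] = \expect{TY}/\expect{T}$. The hypotheses give that $X$ and $\psi(X)$ are $Q$-integrable. The claim then reads
$$ \mathbb{E}_Q[\psi(X)] \leq \psi(\mathbb{E}_Q[X]). $$
This is exactly Jensen's inequality for the concave function $\psi$ under $Q$, applied to the random vector $X$ taking values in the convex set $D$.

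To keep the argument self-contained, and to avoid measure-theoretic subtleties of vector Jensen on a general convex domain $D$, I would prove this last inequality directly with a supergradient argument. Let $x^* = \mathbb{E}_Q[X] = \expect{TX}/\expect{T}$.

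\textbf{Step 1: $x^* \in D$.} This is where I expect the main obstacle. Convex combinations of finitely many points of $D$ lie in $D$, but an expectation is a limit of such combinations, and $D$ need not be closed. I would handle it by induction on dimension. Suppose $x^*\notin D$. Separate $x^*$ from $D$ by a hyperplane: there is $a\neq 0$ with $a^{\top}x \geq a^{\top}x^*$ for all $x\in D$. Then $a^{\top}(X-x^*)\geq 0$ almost surely while $\mathbb{E}_Q[a^{\top}(X-x^*)]=0$. Since $T>0$, $Q$-null sets are $P$-null, so $X$ lies almost surely in $D\cap\{x: a^{\top}x=a^{\top}x^*\}$. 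This is a convex set of lower dimension, and repeating the argument eventually gives a contradiction (in dimension zero the set is a single point, which must equal $x^*$).

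\textbf{Step 2: the case $x^*$ in the relative interior of $D$.} Here $\psi$ has a supergradient $g$ along the affine hull, so
$$ \psi(x)\leq \psi(x^*)+g^{\top}(x-x^*) \quad \forall x\in D. $$
Plugging in $x = X$, multiplying by $T>0$, taking $P$-expectations (all terms are finite by hypothesis) and dividing by $\expect{T}$ gives the result, because $\expect{T(X-x^*)}=0$.

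\textbf{Step 3: the case $x^*$ on the relative boundary.} Supergradients may fail to exist there. I would use the same face-reduction as in Step 1. Take a supporting hyperplane $a$ at $x^*$ that is not constant on $D$. The same argument shows $X$ lies almost surely in the face $D\cap\{a^{\top}x=a^{\top}x^*\}$, which has lower dimension. Restrict $\psi$ to that face and induct on dimension, until $x^*$ lies in the relative interior of the current face and Step 2 applies.
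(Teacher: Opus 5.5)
Your proof is correct. Its first move is the same as the paper's: tilt $P$ by the density $T/\expect{T}$, so the claim becomes ordinary Jensen under the new measure.

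The two proofs put the effort in different places. The paper proves the change-of-measure identity itself, using the layer-cake formula and Fubini--Tonelli for nonnegative $Y$ and then positive and negative parts. It then simply cites Jensen's inequality. You take the Radon--Nikodym identity as known and instead prove the vector Jensen step by hand, using a supergradient at a relative-interior point plus a face-reduction induction.

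Your route buys something real. For a general convex $D$, possibly not closed and possibly lower-dimensional, the paper's citation leaves two issues implicit:
\begin{enumerate}
\item $\expect{TX}/\expect{T}$ must lie in $D$ for the right-hand side to be defined at all. Your Step 1 handles this.
\item A concave $\psi$ may have no supergradient at relative-boundary points. Your Step 3 handles this.
\end{enumerate}
It also shows the tilted measure is nearly cosmetic. Step 2 integrates $T$ times the supergradient inequality directly under $P$. The null-set transfer in Steps 1 and 3 follows from $\expect{T a^{\top}(X-x^*)}=0$ and $T>0$ alone. The paper's version is shorter and modular, but it outsources exactly these domain subtleties to ``standard Jensen.''

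One point to tighten is Step 1. A hyperplane separating $x^*\notin D$ from $D$ may contain all of $D$. For instance, take $D$ an open segment in $\mathbb{R}^2$ and $x^*$ an endpoint; the line through $D$ is a valid separating hyperplane. Then $D\cap H=D$, so the dimension of the set does not drop and ``repeating the argument'' need not make progress. Either of these fixes works:
\begin{enumerate}
\item Induct on the dimension of an ambient affine subspace $V\ni x^*$ in which $X$ lies almost surely, choosing the normal $a$ in the direction space of $V$.
\item Observe that $x^*\in\mathrm{aff}(D)$ automatically, since affine constraints are preserved by the $T$-weighted average. Then separate inside $\mathrm{aff}(D)$, which forces $a^{\top}x$ to be nonconstant on $D$.
\end{enumerate}
Step 3 is fine as written, since there you explicitly choose a nontrivial supporting hyperplane, so the face dimension does strictly decrease.
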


\begin{proof} 
Define a new probability measure $\tilde{P}:\mathcal{F}\rightarrow\mathbb{R}$ by 
$$ \tilde{P}[A] = \frac{\expect{T1_A}}{\expect{T}} \quad \forall A \in \mathcal{F}$$
where $1_A$ is an indicator function that is $1$ if event $A$ is true, and zero else. It can be shown that $\tilde{P}$ satisfies the three axioms of probability.  Let $\tilde{\mathbb{E}}$ denote an expectation on the probability space $(\Omega, \script{F}, \tilde{P})$. For any  
nonnegative random variable $Y:\Omega \rightarrow [0, \infty)$ we have 
\begin{align*}
\tilde{\mathbb{E}}\left[Y\right] &= \int_0^{\infty} \tilde{P}\left[Y>t\right]dt\\
&= \int_0^{\infty} \frac{\expect{T1_{\{Y>t\}}}}{\expect{T}}dt\\
&\overset{(a)}{=} \frac{1}{\expect{T}} \expect{T \int_0^{\infty} 1_{\{Y>t\}}dt} \\
&\overset{(b)}{=}\frac{\expect{TY}}{\expect{T}} 
\end{align*}
where (a) uses Fubini-Tonelli to exchange the order of the integral and expectation; (b) uses
$\int_0^{\infty} 1_{\{Y>t\}}dt =Y$. By writing the random variable $\psi(X)$ in terms of its positive and negative parts, and doing the same for the components of random vector $X$,  it follows that 
\begin{align}
\tilde{\mathbb{E}}\left[\psi(X)\right] &= \frac{\expect{T\psi(X)}}{\expect{T}} \label{eq:jensen1-sub}\\
\tilde{\mathbb{E}}\left[X\right] &= \frac{\expect{TX}}{\expect{T}}\label{eq:jensen2-sub} 
\end{align}
On the other hand, by the standard Jensen's inequality we have 
$$ \tilde{\mathbb{E}}\left[\psi(X)\right] \leq \psi\left(\tilde{\mathbb{E}}\left[X\right]\right)$$
Substituting equalities \eqref{eq:jensen1-sub}-\eqref{eq:jensen2-sub} 
into the above inequality proves the result. 
\end{proof} 

\subsection{Pushback}

\begin{lem}\label{lem:pushback} (Pushback lemma) Let $X\subseteq\mathbb{R}^n$ be a convex set. Let $f:X\rightarrow\mathbb{R}$ be a convex function. Fix $\alpha>0$, $v\in\mathbb{R}^n$. Suppose $x^*$ minimizes $f(x) + \alpha\norm{x-v}^2$ over all $x \in X$. Then
\begin{equation} \label{eq:pushback} 
f(x^*) + \alpha\norm{x^*-v}^2\leq f(x) + \alpha\norm{x-v}^2-\alpha\norm{x-x^*}^2 \quad \forall x\in X
\end{equation} 
\end{lem}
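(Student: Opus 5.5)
The plan is a one-dimensional perturbation argument along the segment from $x^*$ to an arbitrary competitor $x\in X$, using only convexity of $f$ and the exact algebra of the squared norm. Nothing from earlier in the paper is needed; in particular, no differentiability of $f$ is assumed, and the argument avoids subgradients entirely.

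Fix $x\in X$ and, for $\theta\in(0,1]$, set $x_\theta=(1-\theta)x^*+\theta x$, which lies in $X$ by convexity. Optimality of $x^*$ gives
\begin{equation*}
f(x^*)+\alpha\norm{x^*-v}^2\leq f(x_\theta)+\alpha\norm{x_\theta-v}^2 .
\end{equation*}
On the right I would do two things. First, bound $f(x_\theta)\leq (1-\theta)f(x^*)+\theta f(x)$ by convexity. Second, use the exact identity for squared Euclidean norms of convex combinations,
\begin{equation*}
\norm{(1-\theta)a+\theta b}^2=(1-\theta)\norm{a}^2+\theta\norm{b}^2-\theta(1-\theta)\norm{a-b}^2,
\end{equation*}
with $a=x^*-v$ and $b=x-v$, so that $a-b=x^*-x$.

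Substituting both facts, the terms $(1-\theta)\bigl(f(x^*)+\alpha\norm{x^*-v}^2\bigr)$ can be subtracted from both sides. Dividing the result by $\theta>0$ gives
\begin{equation*}
f(x^*)+\alpha\norm{x^*-v}^2\leq f(x)+\alpha\norm{x-v}^2-\alpha(1-\theta)\norm{x-x^*}^2 .
\end{equation*}
Letting $\theta\downarrow 0$, or simply noting that the inequality holds for every $\theta\in(0,1]$ and taking the infimum over $\theta$ of the right-hand side, yields \eqref{eq:pushback}.

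The only step needing care is the bookkeeping after substitution: the $(1-\theta)$ copies of the objective at $x^*$ must cancel cleanly before dividing by $\theta$, and this works because the quadratic identity is an equality rather than an inequality. I expect no genuine obstacle. If $x=x^*$ the claim is trivial, and the limit $\theta\to0$ is harmless since the bound holds for every fixed $\theta$.
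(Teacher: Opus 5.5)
Your proof is correct, and it is essentially the strong-convexity route the paper points to; the paper gives no details, only citing $(2\alpha)$-strong convexity of $f+\alpha\norm{\cdot-v}^2$ or a Bregman three-point inequality. Your convexity bound on $f$ combined with the exact norm identity is precisely the convex-combination definition of $2\alpha$-strong convexity of $f+\alpha\norm{\cdot-v}^2$, and comparing with optimality at $x_\theta$ and letting $\theta\downarrow 0$ extracts the quadratic-growth bound without needing subgradients or optimality conditions for $f$ on $X$.
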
 

We call the final term $-\alpha\norm{x-x^*}^2$ on the right-hand-side of \eqref{eq:pushback} a \emph{pushback term} because it strengthens the inequality in comparison to the basic inequality that would be obtained by definition of $x^*$ being a minimizer.  Lemma \ref{lem:pushback} is well known.  It can be proven either by observing that $f(x) + \alpha\norm{x-v}^2$ is $(2\alpha)$-strongly convex, or by viewing $\alpha\norm{x-v}^2$ as a Bregman divergence and using the Bregman divergence inequalities from Lemma 3.2 in \cite{bregman-pushback}.  Bregman divergence is developed in \cite{bregman-original}.

\section{Problem formulation} 

\subsection{Boundedness assumptions} 

Assume there are  positive constants $t_{min}, t_{max}$ that act as lower and upper bounds
on the task duration, so for all tasks $k \in \{1, 2, 3, ...\}$ and all rows $i \in \{1, ..., S[k]\}$ we have 
\begin{equation} \label{eq:bounded1} 
0<t_{min} \leq T_i[k] \leq t_{max}
\end{equation} 
Further suppose the reward vector options are always in a bounded box. Specifically, define 
\begin{align*}
r_{min} &= (r_{1,min}, ..., r_{n,min})\\
r_{max}&= (r_{1,max}, ..., r_{n,max})
\end{align*}
Assume for all $k \in \{1, 2, 3, ...\}$ and all $i \in \{1, ..., S[k]\}$ that 
\begin{equation} \label{eq:bounded2}
(R_{i,1}[k], ..., R_{i,n}[k]) \in [r_{min}, r_{max}]
\end{equation} 
It follows that under all possible algorithms for making decisions over time, we have for all $k \in \{1, 2, 3, ...\}$: 
\begin{align}
T[k] \in [t_{min}, t_{max}]\label{eq:Tbound}\\
R[k] \in [r_{min}, r_{max}]  \label{eq:Rbound}
\end{align}
For $i \in \{1, ..., n\}$ define
\begin{align*}
\theta_{i,min} &= \min\left\{\frac{r_{i,min}}{t_{min}}, \frac{r_{i,min}}{t_{max}}  \right\} \\
\theta_{i,max} &=  \max\left\{\frac{r_{i,max}}{t_{min}}, \frac{r_{i,max}}{t_{max}}  \right\} 
\end{align*}
where we recall that rewards can be negative, such as when they are used to model penalties like cost or energy. 
Define 
\begin{align}
\theta_{min} &= (\theta_{1,min}, ..., \theta_{n,min}) \label{eq:theta-min} \\
\theta_{max} &= (\theta_{1,max}, ..., \theta_{n,max}) \label{eq:theta-max}\\
[\theta_{min}, \theta_{max}] &= \{(x_1, ..., x_n) \in \mathbb{R}^n: \theta_{i,min}\leq x_i\leq \theta_{i,max}\} \label{eq:theta-cube}
\end{align}
It follows that for all possible decision rules and all tasks $k$ we have 
$$ R[k]/T[k] \in [\theta_{min}, \theta_{max}]$$

\subsection{Utility function} 

Note that $[\theta_{min}, \theta_{max}]\subseteq\mathbb{R}^n$. 
Let $\psi:[\theta_{min}, \theta_{max}]\rightarrow\mathbb{R}$ be a concave, continuous, and entrywise nondecreasing function. 
We call $\psi$ a \emph{utility function}.  It follows that 
$$ \psi(\theta_{min}) \leq \psi(\theta) \leq \psi(\theta_{max}) \quad \forall \theta \in [\theta_{min}, \theta_{max}]$$
The function $\psi$ is not necessarily differentiable. However, we assume for each $\theta \in [\theta_{min}, \theta_{max}]$ that there is 
at least one subgradient vector $\psi'(\theta) \in \partial \psi(\theta)$ that satisfies the following two properties: 
\begin{enumerate} 
\item Subgradient inequality: For all $\theta, u \in [\theta_{min}, \theta_{max}]$ we have 
\begin{equation} \label{eq:bounded-subgradient1}
 \psi(u) \leq \psi(\theta) + \psi'(\theta)^{\top}(u-\theta) 
 \end{equation} 
\item Bounded subgradients: 
There is a vector $\beta=(\beta_1, ..., \beta_n)$ with $\beta_i\geq 0$ for all $i$ such that 
\begin{equation} \label{eq:bounded-subgradient2} 
 0\leq \psi'(\theta) \leq \beta \quad \forall \theta \in [\theta_{min}, \theta_{max}]
 \end{equation} 
\end{enumerate}

\subsection{Stochastic model} \label{section:stochastic-model} 

Assume task matrices $M[k]$ satisfy the boundedness assumptions \eqref{eq:bounded1}-\eqref{eq:bounded2}. For simplicity of exposition, assume the matrices $\{M[k]\}_{k=1}^{\infty}$ are i.i.d. over tasks $k \in \{1, 2, 3, ...\}$ with some distribution $\mu_M$. (This assumption is eventually modified to assume i.i.d. behavior occurs only over one block of tasks.)  The distribution $\mu_M$ determines the distribution on the random number of rows $S[k] \in \mathbb{N}$ and also the joint distribution across all random real entries of the $S[k]\times (n+1)$ matrix $M[k]$. 

Let $\script{P}$ denote the set of all conditional probability distributions for choosing $row[k] \in \{1, ..., S[k]\}$ given the observed matrix $M[k]$ for task $k$. Given $\pi \in \script{P}$, let $(T^{\pi}[k], R^{\pi}[k])$ denote the random duration and reward vector for task $k$, under the assumption that $row[k] \in \{1, ..., S[k]\}$ is chosen randomly according to $\pi$. It is assumed that the randomness used to carry out the selection of $row[k]$  is independent of all previous decisions. Formally, we can assume existence of i.i.d. random variables $\{U[k]\}_{k=1}^{\infty}$ that are uniformly distributed over $[0,1]$, where $U[k]$ is used as an external source of randomness to implement the selection of $row[k]$ according to conditional distribution $\pi$.

Define $\Gamma$ as the set of all one-shot expectations of $(T[k], R[k])$ that can be achieved for task $k$: 
$$ \Gamma = \{\expect{(T^{\pi}[k], R^{\pi}[k])} \in \mathbb{R}^{n+1} : \pi  \in \script{P}\}$$
The boundedness assumptions \eqref{eq:bounded1}-\eqref{eq:bounded2} ensure $\Gamma$ is bounded.  It can be shown that $\Gamma$ is also closed, so it is a compact subset of $\mathbb{R}^{n+1}$. 
Now consider the following deterministic problem: 
\begin{align}
\mbox{Maximize:}  \quad & \psi(\theta) \label{eq:det1}\\
\mbox{Subject to:} \quad & \theta = r/t \label{eq:det2} \\
& (t,r) \in \Gamma \label{eq:det3} 
\end{align} 
The constraint \eqref{eq:det2} avoids division by zero because $t\geq t_{min}>0$ for all $(r,t) \in \Gamma$.  The problem \eqref{eq:det1}-\eqref{eq:det3} is equivalent to maximizing the continuous function $\psi(r/t)$ over the compact set $(t,r) \in \Gamma$, so there is a maximizer $(t^*, r^*) \in \Gamma$.  Define 
$$\theta^* = r^*/t^* $$
It can be shown that $\psi^*=\psi(\theta^*)$, where $\psi^*$ is the optimal objective in 
the dynamic problem \eqref{eq:p1}-\eqref{eq:p2} (see related proofs in \cite{neely-renewal-jmlr}\cite{sno-text}).  It follows that there is a particular $\pi^* \in \script{P}$, corresponding to a conditional probability rule for observing $M[k]$ and choosing a potentially randomized 
$(T^*[k], R^*[k])$, such that 
$$ \expect{(T^*[k], R^*[k])} = (t^*, r^*)$$
Define the \emph{history} $H[k]$ up to but not including task $k>1$: 
$$ H[k]=(M[1], M[2], ..., M[k-1], A[1], ..., A[k-1])$$
where $A[r]$ denotes a vector of decisions (or \emph{actions}) 
made at step $r \in \{0, ..., k-1\}$ (the decision vector $A[r]$ is formally defined in equation \eqref{eq:Ak} of 
the next section).  Define $H[1]=0$. Since $M[k]$ is independent of $H[k]$ we obtain 
\begin{equation} \label{eq:stationary-randomized} 
\expect{(T^*[k], R^*[k]) \vert H[k]} = (t^*, r^*)
\end{equation}

In the infinite horizon version of the problem, the controller does not know the distribution $\mu_M$ of the i.i.d. matrices $\{M[k]\}_{k=1}^{\infty}$. 
It must learn to make good row-selection decisions so that the problem  \eqref{eq:p1}-\eqref{eq:p2} is solved. That is, we want the achieved utility to converge to the optimal utility $\psi^*$ associated with the (unknown) distribution $\mu_M$.   Alternatively, we may be satisfied with an algorithm  that achieves an infinite horizon utility  within $\epsilon$ of $\psi^*$, for some suitably small $\epsilon>0$. 

This paper focuses attention on a related problem where performance analysis is done only over a \emph{finite block of tasks} $\{m_1, ..., m_2\}$  for which matrices are i.i.d. with some unknown distribution $\mu_M$. 
Fix $\epsilon>0$. We explore the fundamental \emph{adaptation time} required to achieve utility over the block that is within $O(\epsilon)$ 
of optimal utility $\psi^*$, where $\psi^*$ is defined as the optimal utility we could achieve in an imaginary infinite horizon problem with i.i.d. matrices of distribution $\mu_M$ over the entire infinite horizon. This problem is challenging for several reasons: 
\begin{enumerate} 
\item The infinite horizon target $\psi^*$ is difficult to achieve over only a finite block of tasks, particularly when the distribution $\mu_M$ is unknown. 
\item The sample path and probability law for $M[k]$, for $k\in \{1, ..., m_1-1\}$, is arbitrary. Thus, any learning that takes place before the block of interest may be irrelevant to the current block.
\item  The starting task $m_1$  of our block of interest is potentially unknown, so we cannot ``reset'' the algorithm to treat task $m_1$ as if it is task $1$.  
\end{enumerate} 

We want an algorithm that provides the $O(\epsilon)$ optimality guarantee of \eqref{eq:adaptation-time} whenever the block size $m_2-m_1+1$ is sufficiently large. The work \cite{neely-renewal-jmlr} shows that it is generally impossible to achieve \eqref{eq:adaptation-time} unless $m_2-m_1+1\geq \Omega(1/\epsilon^2)$, even if we assume $n=1$ (so there is only one reward) and $m_1=1$ (so our block starts at task $1$). 
We develop an algorithm that does not know the probability distributions, but achieves this bound over any block of $\Theta(1/\epsilon^2)$ tasks, assuming matrices $M[k]$ are i.i.d. over that block.  This is useful under a \emph{block-i.i.d. model} where successive blocks of tasks have i.i.d. distributions that can change arbitrarily from block to block. Under this model, if all blocks have size at least $1/\epsilon^2$, then our single algorithm achieves utility over each block that is within $O(\epsilon)$ of the optimal utility for the (unknown) distribution for that block. 

\section{Algorithm} 

\subsection{Auxiliary variables} 

Define the following positive constants: 
\begin{align*}
\gamma_{min} &= 1/t_{max} \\
\gamma_{max} &= 1/t_{min} 
\end{align*}
For $k \in \{1, 2, 3, ...\}$ and $i \in \{1, ..., n\}$ 
define the following additional decision variables: 
\begin{align}
&\gamma[k] \in [\gamma_{min}, \gamma_{max}]\label{eq:gamma}\\
&\theta_i[k] \in [\theta_{i,min}, \theta_{i,max}]\label{eq:theta} 
 \end{align}
 Define $\theta[k]=(\theta_1[k], ..., \theta_n[k])$. Hence
 $$ \theta[k] \in [\theta_{min}, \theta_{max}] \quad \forall k \in \{1, 2, 3, ...\}$$
 We call these new decision variables $(\gamma[k], \theta[k])$ \emph{auxiliary variables}.  At the start of every frame $k$, they can be chosen as any real number in the intervals specified in \eqref{eq:gamma}-\eqref{eq:theta}. 
Intuitively, the value $\gamma[k]$ can be viewed as a guess of $1/t^*$, 
while $\theta_i[k]$ is a guess of $\theta_i^*=r_i^*/t^*$, where $(t^*, r^*)$ is defined in Section \ref{section:stochastic-model}.  We shall find it is important to ensure the auxiliary variables do not dramatically change
from one task to the next. However, for the purposes of adapting to a change in the probability distribution of $M[k]$, there needs to be enough flexibility so the auxiliary variables  can move towards 
a new optimality point. 

%Intuitively, we want the guesses to get close to the true values they are approximating.  We do not want them to vary much over time. However, they should have enough flexibility to adapt when probability distributions change. 

 \subsection{Virtual queues} 
 
 Following the \emph{virtual queue} paradigm of \cite{sno-text}, we design real-valued processes $\{J[k]\}_{k=1}^{\infty}$ and $\{Q_i[k]\}_{k=1}^{\infty}$ for $i \in \{1, ..., n\}$ with initial conditions $J[1]=0$, $Q_i[1]=0$ for $i \in \{1, ..., n\}$, and update equation
 \begin{align}
 J[k+1] &= J[k]+(T[k]-1/\gamma[k]) \quad \forall k \in \{1, 2, 3, ...\} \label{eq:j-update}\\
 Q_i[k+1] &= \max\{Q_i[k] + (\theta_i[k]T[k]-R_i[k]), 0\}  \quad \forall k \in \{1, 2, 3, ...\} \label{eq:q-update} 
 \end{align}
 where $(T[k], R[k]) = (T[k], R_1[k], ..., R_n[k])$ is the row of $M[k]$ that is chosen for task $k$, and $\gamma[k], \theta_1[k], ..., \theta_n[k]$ are the auxiliary variables chosen for task $k$.  As described in \cite{sno-text}, if we control the system to keep $J[k]$ bounded, then the average of the actual task durations $T[k]$ (averaged over a suitably large block of tasks) is close to the average of $1/\gamma[k]$ over that same block.  Likewise, if we keep $Q_i[k]$ bounded, then the average of $\theta_i[k]T[k]$ over a large block of tasks is less than or equal to the average of $R_i[k]$ over the same block (plus a small error that depends on the block size). 
 The specific virtual queue $J[k]$ defined above is the same as the one used for the renewal problem in  \cite{neely-adaptive-renewals}. 
 
 \subsection{Lyapunov drift} 
 
 For $k \in \{1, 2, 3, ...\}$ define 
 \begin{align*} 
 Q[k]&=(Q_1[k], ..., Q_n[k]) \\
 L[k]&= \frac{1}{2}\left[J[k]^2 + \norm{Q[k]}^2\right]\\
 \Delta[k] &= L[k+1]-L[k]
 \end{align*}
 where $\norm{Q[k]}^2=\sum_{i=1}^nQ_i[k]^2$.   The value $L[k]$
 is a scalar measure of the size of the virtual queue vector $(J[k], Q[k])$ 
 at step $k$. We call $L[k]$ a \emph{Lyapunov function}. We call $\Delta[k]$ the \emph{Lyapunov drift}. [Traditionally, the term \emph{Lyapunov drift} is associated with a conditional expectation of $\Delta[k]$ given the system state at step $k$. For simplicity we refer to $\Delta[k]$ itself 
 as the drift, although we do eventually take a conditional expectation.]
  
  To ensure all virtual queues are bounded, 
 it is enough to ensure $L[k]$ is bounded.  For this, we choose decision variables at the start of each 
 task $k$ to make $\Delta[k]$ small (preferably nonpositive). 
 Of course, we also need to make decisions that provide high rewards. 
 It is important to strike a balance between the drift objective and the reward objective. At a high level, we make use of the \emph{drift-plus-penalty} paradigm of \cite{sno-text}.  However, to accommodate the 
 challenging fractional structure of this renewal problem, we develop
 a novel \emph{layered algorithm} that is different from the classical Lyapunov drift framework of \cite{tass-radio-nets}\cite{tass-server-allocation} and also different from  drift-plus-penalty methods \cite{sno-text}\cite{neely-fairness-ton}, stochastic opportunistic scheduling \cite{shroff-opportunistic},  and stochastic primal-dual   \cite{vijay-allerton02}\cite{prop-fair-down}\cite{atilla-primal-dual-jsac}\cite{stolyar-greedy}.   The algorithm is closest in concept to our recent work \cite{neely-adaptive-renewals} that treats a similar renewal problem without the concave objective function. 
 
 Before describing the algorithm, we provide the following bound on $\Delta[k]$.  For each $k \in \{1, 2, 3, ...\}$ define the \emph{decision vector} $A[k]$ (also called an \emph{action vector})  by 
\begin{equation} \label{eq:Ak}
A[k]=(T[k], R[k], \gamma[k], \theta[k])
\end{equation} 
 
 \begin{lem} Consider any sequence of decisions $\{A[k]\}_{k=1}^{\infty}$ that satisfy \eqref{eq:Ak}, \eqref{eq:Tbound}-\eqref{eq:Rbound}, and \eqref{eq:gamma}-\eqref{eq:theta}.  Then for each $k \in \{1, 2, 3, ...\}$ we have 
 \begin{equation} \label{eq:Delta}
 \Delta[k]\leq b + J[k](T[k]-1/\gamma[k]) + Q[k]^{\top}(\theta[k]T[k]-R[k])
 \end{equation} 
 where 
 \begin{equation} \label{eq:b} 
 b = \frac{(t_{max}-t_{min})^2}{2} + \frac{1}{2}\sum_{i=1}^n(r_{i,max}-r_{i,min})^2
 \end{equation} 
 \end{lem}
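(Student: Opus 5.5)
The plan is to square each virtual queue update, sum the results, and bound the quadratic remainder terms by worst-case constants. No earlier lemma is needed beyond the boundedness assumptions \eqref{eq:Tbound}-\eqref{eq:Rbound} and the auxiliary-variable ranges \eqref{eq:gamma}-\eqref{eq:theta}.

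\emph{Step 1: the $J$ queue.} Squaring \eqref{eq:j-update} gives
$$\frac{1}{2}J[k+1]^2 = \frac{1}{2}J[k]^2 + J[k](T[k]-1/\gamma[k]) + \frac{1}{2}(T[k]-1/\gamma[k])^2.$$
Since $\gamma[k]\in[\gamma_{min},\gamma_{max}]=[1/t_{max},1/t_{min}]$, we have $1/\gamma[k]\in[t_{min},t_{max}]$. Also $T[k]\in[t_{min},t_{max}]$ by \eqref{eq:Tbound}. Two points in the same interval differ by at most its length, so $(T[k]-1/\gamma[k])^2\le (t_{max}-t_{min})^2$. This produces the first part of $b$ together with the linear term $J[k](T[k]-1/\gamma[k])$.

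\emph{Step 2: each $Q_i$ queue.} Write $a_i[k]=\theta_i[k]T[k]-R_i[k]$. For every real $x$ we have $(\max\{x,0\})^2\le x^2$, so \eqref{eq:q-update} gives
$$\frac{1}{2}Q_i[k+1]^2\le \frac{1}{2}Q_i[k]^2 + Q_i[k]a_i[k] + \frac{1}{2}a_i[k]^2.$$
Summing over $i$ and adding Step 1 gives $\Delta[k]$ on the left. On the right we get the two linear terms of \eqref{eq:Delta}, namely $J[k](T[k]-1/\gamma[k])+Q[k]^{\top}(\theta[k]T[k]-R[k])$, plus the remainder $\frac12(T[k]-1/\gamma[k])^2+\frac12\sum_i a_i[k]^2$.

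\emph{Step 3: the main obstacle, bounding $a_i[k]^2$.} It remains to bound $a_i[k]^2$ by a deterministic constant, which is where I expect the difficulty. In Step 1, $T[k]$ and $1/\gamma[k]$ live in the same interval. Here, by contrast, $\theta_i[k]T[k]$ ranges over $\{\theta T:\theta\in[\theta_{i,min},\theta_{i,max}],\,T\in[t_{min},t_{max}]\}$, while $R_i[k]$ ranges over $[r_{i,min},r_{i,max}]$. The first interval can be strictly larger than the second. For example, if $r_{i,max}>0$ then $\theta_{i,max}t_{max}=r_{i,max}t_{max}/t_{min}>r_{i,max}$.

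My first attempt is to compute the endpoints of the product interval explicitly in the sign cases for $r_{i,min}$ and $r_{i,max}$, and then take $\max|a_i[k]|$ as the distance between the extreme endpoints of the two intervals. I will then check whether this reduces to $(r_{i,max}-r_{i,min})^2$. If it does not, the argument goes through verbatim with $b$ replaced by the worst-case constant
$$b'=\frac{(t_{max}-t_{min})^2}{2}+\frac12\sum_{i=1}^n \max\{(\theta_{i,max}t_{max}-r_{i,min})^2,\ (r_{i,max}-\theta_{i,min}t_{max})^2\}.$$
This $b'$ is still a finite constant independent of $k$, which is all that later drift arguments should require. Either way, the structure of \eqref{eq:Delta} is unchanged.
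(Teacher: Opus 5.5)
Your Steps 1 and 2 are the paper's argument: square both updates, use $(\max\{u,0\})^2\le u^2$, sum and halve. The paper finishes by simply asserting that $b$ is a worst-case bound on the remainder $\frac12(T[k]-1/\gamma[k])^2+\frac12\sum_i(\theta_i[k]T[k]-R_i[k])^2$.

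Your suspicion in Step 3 is right: that assertion is false, and so is the lemma with the stated $b$. Take $n=1$, $t_{min}=1$, $t_{max}=2$, $r_{1,min}=0$, $r_{1,max}=1$, so $\theta_{1,min}=0$, $\theta_{1,max}=1$ and $b=1$. At $k=1$ we have $J[1]=Q_1[1]=0$, so the right side of \eqref{eq:Delta} equals $b=1$. Choose $T[1]=2$, $R_1[1]=0$, $\theta_1[1]=1$ and any admissible $\gamma[1]$. Then $Q_1[2]=2$, hence $\Delta[1]\ge\frac12 Q_1[2]^2=2>1$. The algorithm itself makes these choices when $\psi$ is strictly increasing and $M[1]$ has the single row $(2,0)$, since Layer 1 with $Q[1]=0$ returns $\theta_{max}$.

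So the check you planned comes out negative, and the right move is yours: enlarge the constant. Since $b$ enters the later results only through the $O(1)$ term $\epsilon b$ (and inside $w[k]$), nothing downstream changes.

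Your $b'$ is valid, but its justification needs one fix. The numbers $\theta_{i,max}t_{max}$ and $\theta_{i,min}t_{max}$ are the extremes of $\theta_i[k]T[k]$ only when $r_{i,min}\le 0\le r_{i,max}$. Let $\rho=t_{max}/t_{min}$.
\begin{itemize}
\item If $r_{i,max}<0$, the true largest value of $\theta_i[k]T[k]-R_i[k]$ is $r_{i,max}/\rho-r_{i,min}$, not $r_{i,max}-r_{i,min}$. It is dominated by your second term, because $(r_{i,max}-\rho r_{i,min})-(r_{i,max}/\rho-r_{i,min})=(\rho-1)(r_{i,max}/\rho-r_{i,min})\ge 0$.
\item If $r_{i,min}>0$, the true largest value of $R_i[k]-\theta_i[k]T[k]$ is $r_{i,max}-r_{i,min}/\rho$. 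It is dominated by your first term, because $(\rho r_{i,max}-r_{i,min})-(r_{i,max}-r_{i,min}/\rho)=(\rho-1)(r_{i,max}-r_{i,min}/\rho)\ge 0$.
\end{itemize}
Hence $b'$ bounds the remainder in every sign case. In fact it equals the supremum of the remainder, since each coordinate's worst case occurs at $T[k]=t_{max}$, which is compatible with $1/\gamma[k]=t_{min}$.

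If you prefer to skip the case analysis, the cruder bound $|\theta_i[k]T[k]-R_i[k]|\le t_{max}\max\{|\theta_{i,min}|,|\theta_{i,max}|\}+\max\{|r_{i,min}|,|r_{i,max}|\}$ holds immediately and serves equally well.
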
 
 
 \begin{proof} 
 Fix $k \in \{1, 2, 3, ...\}$. Squaring \eqref{eq:j-update} gives
\begin{equation} \label{eq:proof-delta1} 
J[k+1]^2 = J[k]^2 + (T[k]-1/\gamma[k])^2 + 2J[k](T[k]-1/\gamma[k])
\end{equation} 
 Fix $i \in \{1, ..., n\}$. Squaring \eqref{eq:q-update} and using $\max\{u,0\}^2\leq u^2$ for all $u\in\mathbb{R}$ gives
\begin{equation}
Q_i[k+1]^2\leq Q_i[k]^2 + (\theta_i[k]T[k]-R_i[k])^2 + 2Q_i[k](\theta_i[k]T[k]-R_i[k])\label{eq:proof-delta2}
\end{equation} 
Summing up \eqref{eq:proof-delta1}-\eqref{eq:proof-delta2} and dividing by $2$ gives 
$$\Delta[k]\leq b[k] + J[k](T[k]-1/\gamma[k]) +Q[k]^{\top}(\theta[k]T[k]-R[k])$$
where we define
$$ b[k] = \frac{(T[k]-1/\gamma[k])^2}{2} + \frac{1}{2}\sum_{i=1}^k(\theta_i[k]T[k]-R_i[k])^2$$
The value $b$ in \eqref{eq:b} is a worst-case upper bound on $b[k]$.
 \end{proof} 
 
 \subsection{Drift-plus-penalty bound} 
 
 To simplify the layered analysis, for each $k \in \{1, 2, 3, ...\}$ define 
 \begin{align}
 c[k] &= J[k]-\frac{1}{\epsilon}\psi(\theta[k]) +Q[k]^{\top}\theta[k] \label{eq:ck}\\
 d[k] &= c[k]T[k]- Q[k]^{\top}R[k] \label{eq:dk} 
 \end{align}
 \begin{lem} Consider any sequence of decisions $\{A[k]\}_{k=1}^{\infty}$ that satisfy \eqref{eq:Ak}, \eqref{eq:Tbound}-\eqref{eq:Rbound}, and \eqref{eq:gamma}-\eqref{eq:theta}.  Then for each $k \in \{1, 2, 3, ...\}$ we have 
\begin{equation} \label{eq:layer-lem}
\gamma[k]\left[\Delta[k]-\frac{T[k]}{\epsilon}\psi(\theta[k])\right] \leq  b\gamma[k]- J[k] + \gamma[k]d[k]  
\end{equation} 
where $c[k]$ and $d[k]$ are defined in \eqref{eq:ck}-\eqref{eq:dk}. The term $\Delta[k] - \frac{T[k]}{\epsilon} \psi(\theta[k])$ shall be called the \emph{drift-plus-penalty} term.
 \end{lem}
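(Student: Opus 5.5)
The plan is to start from the drift bound \eqref{eq:Delta} of the previous lemma and simply add the penalty term $-\frac{T[k]}{\epsilon}\psi(\theta[k])$ to both sides. This gives
\begin{align*}
\Delta[k]-\frac{T[k]}{\epsilon}\psi(\theta[k]) &\leq b + J[k]T[k] - \frac{J[k]}{\gamma[k]} + T[k]Q[k]^{\top}\theta[k] - Q[k]^{\top}R[k] - \frac{T[k]}{\epsilon}\psi(\theta[k]).
\end{align*}
Next I will group every term that is multiplied by $T[k]$. These are $J[k]T[k]$, $T[k]Q[k]^{\top}\theta[k]$ and $-\frac{T[k]}{\epsilon}\psi(\theta[k])$, and together they equal $c[k]T[k]$ by the definition \eqref{eq:ck}. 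Subtracting $Q[k]^{\top}R[k]$ from this yields exactly $d[k]$ by \eqref{eq:dk}. The right-hand side therefore becomes $b - J[k]/\gamma[k] + d[k]$.

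Finally I multiply both sides by $\gamma[k]$. This preserves the inequality because $\gamma[k]\geq\gamma_{min}=1/t_{max}>0$ by \eqref{eq:gamma}. The result is $b\gamma[k]-J[k]+\gamma[k]d[k]$, which is \eqref{eq:layer-lem}.

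There is no real obstacle here. The only points needing care are that the multiplier $\gamma[k]$ is strictly positive, and that the regrouping of terms matches the definitions of $c[k]$ and $d[k]$ exactly, including the sign on $\psi$ and the fact that $Q[k]^{\top}\theta[k]$ is multiplied by $T[k]$. The inequality holds pathwise for every $k$, with no expectations needed.
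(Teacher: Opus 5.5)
Your proof is correct and matches the paper's: you add $-\frac{T[k]}{\epsilon}\psi(\theta[k])$ to both sides of \eqref{eq:Delta}, multiply by $\gamma[k]>0$, and identify $c[k]T[k]-Q[k]^{\top}R[k]=d[k]$. The only difference is that you regroup into $d[k]$ before multiplying by $\gamma[k]$ rather than after, and this changes nothing.
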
 
 \begin{proof} 
 Adding $-\frac{T[k]}{\epsilon}\psi(\theta[k])$ to both sides of \eqref{eq:Delta} gives 
 $$ \Delta[k]-\frac{T[k]}{\epsilon}\psi(\theta[k])\leq b + J[k](T[k]-1/\gamma[k]) + Q[k]^{\top}(\theta[k]T[k]-R[k]) -\frac{T[k]}{\epsilon}\psi(\theta[k])$$
 Multiplying both sides by the nonnegative value $\gamma[k]$ gives
 \begin{align*}
 &\gamma[k]\left[\Delta[k]-\frac{T[k]}{\epsilon}\psi(\theta[k])\right] \\
 &\leq b\gamma[k]- J[k] +\gamma[k]\left[J[k]T[k]+Q[k]^{\top}(\theta[k]T[k]-R[k]) - \frac{T[k]}{\epsilon}\psi(\theta[k]) \right]\\
 &\overset{(a)}{=} b\gamma[k]- J[k] +\gamma[k]\left[c[k]T[k]-Q[k]^{\top}R[k] \right]\\
 &\overset{(b)}{=} b\gamma[k]- J[k] +\gamma[k]d[k]
 \end{align*}
 where (a) substitutes the definition of $c[k]$ from \eqref{eq:ck}; (b) substitutes the definition of $d[k]$ from \eqref{eq:dk}. 
 \end{proof} 
  
 \subsection{Layered algorithm} 
 
 We provide the following \emph{layered algorithm}, similar to the \emph{hierarchical optimization} technique of \cite{neely-adaptive-renewals}.  The algorithm uses positive parameters: 
 $$\epsilon>0 \quad, \quad \alpha>0$$
 These parameters are right-sized later. 
 For initial conditions, define 
 \begin{align}
 J[1]&=0 \label{eq:init1}\\
 Q[1]&= (0, ..., 0)\label{eq:init2} \\
 \gamma[0] &= 1/t_{min} \label{eq:init3}
 \end{align}

Fix $\epsilon>0$. At the start of each task $k \in \{1, 2, 3, ...\}$, observe values of virtual queues $J[k], Q[k]$ and treat them as known constants. Also observe $(\gamma[k-1], \theta[k-1])$ and treat them as known constants.  Recall that $[\theta_{min}, \theta_{max}]\subseteq\mathbb{R}^n$ is the multidimensional box defined by \eqref{eq:theta-cube}. Then:
\begin{itemize} 
\item Layer 1: Choose $\theta[k] \in [\theta_{min}, \theta_{max}]$ to maximize
$$ \frac{1}{\epsilon} \psi(\theta[k]) - Q[k]^{\top}\theta[k]$$
This maximizes a continuous and concave function over a compact box, so at least one maximizer $\theta[k]$ exists. 

\item Layer 2: Observe the value of $\theta[k]$ from the Layer 1 decision, which determines the value of $c[k]$ in \eqref{eq:ck}. Observe the new task matrix $M[k]$ and choose a row $(T[k], R[k]) \in Row(M[k])$ to minimize 
$$ c[k]T[k]  - Q[k]^{\top}R[k]$$
In other words, for each row $i$ of $M[k]$ (each row being a vector in $\mathbb{R}^{n+1}$), we take an inner product of that row with the vector $(c[k], -Q[k]) \in \mathbb{R}^{n+1}$. Then we choose the row with the smallest inner product.  

\item Layer 3: Observe the value of $c[k]$ defined at Layer 2, and observe the values of $T[k], R[k]$ from the Layer 2 decision. The values $c[k], T[k], R[k]$ together determine the value of $d[k]$ in \eqref{eq:dk}. Then choose $\gamma[k]\in [\gamma_{min}, \gamma_{max}]$ to minimize 
\begin{equation} \label{eq:layer3} 
\gamma[k]d[k]+ \frac{(\alpha/\epsilon) \gamma[k-1]}{2}(\gamma[k]-\gamma[k-1])^2
\end{equation} 
The simple closed-form solution is  
\begin{equation} \label{eq:layer-3-result}
\gamma[k] = \left[ \gamma[k-1] - \frac{d[k]}{(\alpha/\epsilon)\gamma[k-1]} \right]_{\gamma_{min}}^{\gamma_{max}}
\end{equation} 
[Observe that $\gamma[k-1]\geq \gamma_{min}>0$, so \eqref{eq:layer-3-result} avoids division by $0$.] 
\end{itemize} 

At the end of each task $k$ the virtual queues are updated via \eqref{eq:j-update}-\eqref{eq:q-update}. 

\subsection{Discussion of the steps} 

The layered structure of this procedure is analogous to a multi-timescale algorithm. However, the decisions are all made for one timescale. The three layered steps are performed one after the other, all at the start of task $k$. The Layer 1 decision chooses $\theta[k] \in [\theta_{min}, \theta_{max}]\subseteq\mathbb{R}^n$ as the solution to a (possibly nonsmooth) convex optimization problem. When the concave utility function $\psi$ is separable,  the $\theta[k]$ value can often be solved in closed form. For example, given weights $w_i>0, \beta>0$ we can consider the example utility function 
$$ \psi(\theta)= \sum_{i=1}^k w_i\log(1+\beta \theta_i) \quad \forall \theta \in [\theta_{min}, \theta_{max}]$$
This models a weighted form of proportional fairness (see \cite{kelly-charging}\cite{kelly-shadowprice} for such utility functions). Then
\begin{align*}
\theta_i[k] &= \left\{\begin{array}{cc}
\theta_{i,max} & \mbox{ if $Q_i[k]=0$} \\
 \left[\frac{w_i}{\epsilon Q_i[k]} - \frac{1}{\beta}\right]_{\theta_{i,min}}^{\theta_{i,max}} & \mbox{ if $Q_i[k]>0$} 
 \end{array}\right.
 \end{align*}

The Layer 2 decision takes $S[k]$ inner products, where $S[k]$ is the random number of rows of matrix $M[k]$.  Suppose $s_{max}$ is an upper bound on the number of rows of $M[k]$. Each inner product is between two vectors in $\mathbb{R}^{n+1}$, so the worst-case complexity of the Layer 2 decision for task $k$ is $(n+1)s_{max}$ (measured in multiply-adds).  If each task has at most $10$ processing options, then $s_{max}=10$ and the worst-case complexity is $10(n+1)$, which is typically easy (for example, when $n\leq 10$).  On the other hand, if the task matrices can have as many 
as $10^6$ rows, then $s_{max}=10^6$ and the complexity of Layer 2 can be large. Fortunately, the inner products can be computed in a parallel fashion where different batches of rows are given to different processors that assist in 
the computation. Finally, the Layer 3 decision always has the simple closed form result \eqref{eq:layer-3-result}. 

The usage of the term $(\gamma[k]-\gamma[k-1])^2$ in the Layer 3 decision is similar to prox-type regularizers for convex optimization. However, that term is introduced here only to penalize variation in the $\gamma[k]$ decision from one task to the next.  It is not standard to weight this prox-type term by the $\gamma[k-1]$ value as we do. This is done for a specific reason that is made apparent in the analysis of the algorithm. 

\subsection{Key inequalities} 

Fix $k \in \{1, 2, 3, ...\}$. Assume the matrix $M[k]$ arises from some distribution associated with values 
\begin{equation} \label{eq:distribution-star}
(t^*, r^*), \quad \theta^*=r^*/t^*, \quad \psi^*=\psi(\theta^*)
\end{equation} 
as defined in Section \ref{section:stochastic-model}, and $M[k]$ is independent of $H[k]$. 

\begin{enumerate} 
\item Since the  Layer 1 decision chooses $\theta[k]$ to maximize $(1/\epsilon)\psi(\theta[k]) - Q[k]^{\top}\theta[k]$ 
over all possible $\theta \in [\theta_{min}, \theta_{max}]$, and since $\theta^*\in [\theta_{min}, \theta_{max}]$, we have 
\begin{equation} \label{eq:layer1-key}
-(1/\epsilon)\psi(\theta[k]) + Q[k]^{\top}\theta[k] \leq -(1/\epsilon)\psi^* + Q[k]^{\top}\theta^*
\end{equation} 
\item Since the Layer 2 decision chooses row $(T[k], R[k])$ to minimize 
$c[k]T[k]- Q[k]^{\top}R[k]$ over all alternative row decisions, including the randomized decisions $(T^*[k], R^*[k])$ defined in Section \ref{section:stochastic-model}, we have 
\begin{equation} \label{eq:layer2-key}
c[k]T[k]- Q[k]^{\top}R[k] \leq c[k]T^*[k]- Q[k]^{\top}R^*[k]
\end{equation} 
\item Since the Layer 3 decision chooses $\gamma[k] \in [\gamma_{min}, \gamma_{max}]$ to minimize the strongly convex function \eqref{eq:layer3}, we have by the pushback lemma:
\begin{align*} 
&\gamma[k] d[k] + \frac{(\alpha/\epsilon)\gamma[k-1]}{2}(\gamma[k]-\gamma[k-1])^2\nonumber\\
&\leq \gamma d[k] + \frac{(\alpha/\epsilon) \gamma[k-1]}{2}(\gamma-\gamma[k-1])^2 - \frac{(\alpha/\epsilon)\gamma[k-1]}{2}(\gamma-\gamma[k])^2  \quad \forall \gamma \in [\gamma_{min}, \gamma_{max}]
\end{align*} 
Since $1/t^*\in [\gamma_{min}, \gamma_{max}]$ we have from the above inequality  
\begin{align} 
&\gamma[k]d[k] + \frac{(\alpha/\epsilon)\gamma[k-1]}{2}(\gamma[k]-\gamma[k-1])^2\nonumber\\
&\leq (1/t^*) d[k] + \frac{(\alpha/\epsilon) \gamma[k-1]}{2}\left[(1/t^*-\gamma[k-1])^2 -(1/t^*-\gamma[k])^2\right]\label{eq:layer3-key}
\end{align} 
\end{enumerate} 

The above inequalities are the key to the first theorem in the next section. 

\section{Performance analysis}

\begin{thm} Fix parameters $\epsilon>0, \alpha>0$.  Fix initial conditions 
according to \eqref{eq:init1}-\eqref{eq:init3}. 
Suppose the algorithm of the previous section 
is run over all tasks $k \in \{1, 2, 3, ...\}$. 
Fix a particular $k \in \{1, 2, 3, ...\}$ and assume $M[k]$ is independent of $H[k]$, and $M[k]$ has distribution associated with values $t^*, r^*,  \theta^*=r^*/t^*, \psi^*$  in \eqref{eq:distribution-star}. Then for this task $k$: 
\begin{align}
 &\expect{\epsilon\Delta[k]-T[k]\psi(\theta[k])}\nonumber \\
&\leq \epsilon b - \psi^*\expect{\frac{1}{\gamma[k]}} + \frac{\alpha}{2}\expect{(1/t^*-\gamma[k-1])^2-(1/t^*-\gamma[k])^2}+\frac{1}{2\alpha}\expect{w[k]^2}\label{eq:DPP} 
\end{align}
where $w[k]$ is defined
\begin{equation} \label{eq:wk} 
w[k]=\left[\frac{-\epsilon b}{\gamma[k-1]} + \frac{\psi^*}{\gamma[k]\gamma[k-1]} + \frac{\epsilon \Delta[k] - T[k]\psi(\theta[k])}{\gamma[k-1]} \right]
\end{equation} 
\end{thm}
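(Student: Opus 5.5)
The plan is to combine the three key inequalities with the drift-plus-penalty lemma (inequality \eqref{eq:layer-lem}). The complication is that the resulting bound carries a random weight $\gamma[k]$. I would remove it in two moves: first replace $\gamma[k]$ by the $H[k]$-measurable weight $\gamma[k-1]$ at the cost of a Young's-inequality term, then divide by $\gamma[k-1]$ inside a conditional expectation given $H[k]$. Every step is pointwise except the Layer 2 comparison, which holds only in conditional expectation. So I will work with $\expect{\cdot\,\vert H[k]}$ throughout and take the full expectation only at the end.

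\emph{Step 1: collapse the layers.} Start from \eqref{eq:layer-lem} and bound $\gamma[k]d[k]$ with the Layer 3 inequality \eqref{eq:layer3-key}. This gives, pointwise,
$$\gamma[k]\Big[\Delta[k]-\tfrac{T[k]}{\epsilon}\psi(\theta[k])\Big]\le b\gamma[k]-J[k]+\tfrac{1}{t^*}d[k]+\tfrac{(\alpha/\epsilon)\gamma[k-1]}{2}\big[(1/t^*-\gamma[k-1])^2-(1/t^*-\gamma[k])^2-(\gamma[k]-\gamma[k-1])^2\big].$$
Next take the conditional expectation of $d[k]$ given $H[k]$. The quantities $J[k],Q[k],\theta[k],c[k]$ are all determined by $H[k]$. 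So the Layer 2 inequality \eqref{eq:layer2-key} together with \eqref{eq:stationary-randomized} gives
$$\expect{d[k]\vert H[k]}\le c[k]t^*-Q[k]^{\top}r^*.$$
Multiplying by $1/t^*$ yields $c[k]-Q[k]^{\top}\theta^*$. By the Layer 1 inequality \eqref{eq:layer1-key}, this is at most $J[k]-\psi^*/\epsilon$. The $-J[k]$ term then cancels.

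Multiply through by $\epsilon$ and define $X=\epsilon\Delta[k]-T[k]\psi(\theta[k])-\epsilon b+\psi^*/\gamma[k]$, so that $w[k]=X/\gamma[k-1]$. We obtain
$$\expect{\gamma[k]X\vert H[k]}\le \tfrac{\alpha\gamma[k-1]}{2}\expect{(1/t^*-\gamma[k-1])^2-(1/t^*-\gamma[k])^2-(\gamma[k]-\gamma[k-1])^2\,\vert H[k]}.$$
Here we used $\psi^*=\gamma[k]\cdot\psi^*/\gamma[k]$ to absorb the constant into $\gamma[k]X$.

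\emph{Step 2: change the weight.} This is the main obstacle, and it is where the unusual $\gamma[k-1]$ weighting of the prox term pays off. Write $\gamma[k]X=\gamma[k-1]X+(\gamma[k]-\gamma[k-1])\gamma[k-1]w[k]$. Young's inequality gives, pointwise,
$$-(\gamma[k]-\gamma[k-1])\gamma[k-1]w[k]\le\gamma[k-1]\Big[\tfrac{\alpha}{2}(\gamma[k]-\gamma[k-1])^2+\tfrac{1}{2\alpha}w[k]^2\Big].$$
The $(\gamma[k]-\gamma[k-1])^2$ part exactly cancels the pushback term. What remains is
$$\gamma[k-1]\expect{X\vert H[k]}\le\gamma[k-1]\Big(\tfrac{\alpha}{2}\expect{(1/t^*-\gamma[k-1])^2-(1/t^*-\gamma[k])^2\vert H[k]}+\tfrac{1}{2\alpha}\expect{w[k]^2\vert H[k]}\Big).$$

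\emph{Step 3: finish.} Since $\gamma[k-1]\ge\gamma_{min}>0$ is $H[k]$-measurable, divide both sides by it. Then take the full expectation by iterated expectations, and move $-\epsilon b+\psi^*/\gamma[k]$ back to the right-hand side. This gives \eqref{eq:DPP}.

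For $k=1$, the same argument applies with $H[1]=0$, so the conditional expectations are just ordinary expectations, and with the constant $\gamma[0]=1/t_{min}$. All quantities are bounded, so every expectation is finite.
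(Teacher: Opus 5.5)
Your proposal is correct and is essentially the paper's proof. The paper likewise chains \eqref{eq:layer-lem} with \eqref{eq:layer3-key}, \eqref{eq:layer2-key}, and \eqref{eq:layer1-key}, conditions on $H[k]$, divides by the $H[k]$-measurable $\gamma[k-1]$, and then absorbs the weight mismatch by completing the square, $(\gamma[k-1]-\gamma[k])w[k]-\frac{\alpha}{2}(\gamma[k-1]-\gamma[k])^2\leq \frac{w[k]^2}{2\alpha}$; this is your Young's-inequality step written through the identity $\gamma[k]X=\gamma[k-1]X+(\gamma[k]-\gamma[k-1])\gamma[k-1]w[k]$, applied after rather than before the division.

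One small correction to your framing: \eqref{eq:layer2-key} holds pointwise, since $(T^*[k],R^*[k])$ is always some row of $M[k]$, and only its evaluation through \eqref{eq:stationary-randomized} requires the conditioning.
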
 

\begin{proof} 
Adding the expression $\frac{(\alpha/\epsilon)\gamma[k-1]}{2}(\gamma[k]-\gamma[k-1])^2$ to both sides of \eqref{eq:layer-lem} gives 
\begin{align*}
&\gamma[k]\left[\Delta[k]-\frac{T[k]}{\epsilon}\psi(\theta[k])\right] + \frac{(\alpha/\epsilon)\gamma[k-1]}{2}(\gamma[k]-\gamma[k-1])^2 \\
&\leq  b\gamma[k]- J[k] + \gamma[k]d[k]  + \frac{(\alpha/\epsilon)\gamma[k-1]}{2}(\gamma[k]-\gamma[k-1])^2\\
&\overset{(a)}{\leq}  b\gamma[k]- J[k] + (1/t^*)d[k]  + \frac{(\alpha/\epsilon)\gamma[k-1]}{2}\left[(1/t^*-\gamma[k-1])^2-(1/t^*-\gamma[k])^2\right]\\
&\overset{(b)}{=} b\gamma[k]- J[k] + (1/t^*)\left[ c[k]T[k]- Q[k]^{\top}R[k] \right]  \\
&\quad + \frac{(\alpha/\epsilon)\gamma[k-1]}{2}\left[(1/t^*-\gamma[k-1])^2-(1/t^*-\gamma[k])^2\right]\\
&\overset{(c)}{=} b\gamma[k]- J[k] + (1/t^*)\left[ c[k]T^*[k]- Q[k]^{\top}R^*[k] \right]  \\
&\quad + \frac{(\alpha/\epsilon)\gamma[k-1]}{2}\left[(1/t^*-\gamma[k-1])^2-(1/t^*-\gamma[k])^2\right]
\end{align*}
where (a) holds by \eqref{eq:layer3-key}; (b) substitutes the definition of $d[k]$ from \eqref{eq:dk}; (c) holds by \eqref{eq:layer2-key}.  By 
substituting the definition of  $c[k]$ from \eqref{eq:ck} into the final expression above we obtain 
\begin{align*}
&\gamma[k]\left[\Delta[k]-\frac{T[k]}{\epsilon}\psi(\theta[k])\right] + \frac{(\alpha/\epsilon)\gamma[k-1]}{2}(\gamma[k]-\gamma[k-1])^2 \\
&\leq  b\gamma[k]- J[k] + (1/t^*)\left[\left(J[k]-\frac{1}{\epsilon}\psi(\theta[k]) +Q[k]^{\top}\theta[k] \right)T^*[k]- Q[k]^{\top}R^*[k] \right]  \\
&\quad + \frac{(\alpha/\epsilon)\gamma[k-1]}{2}\left[(1/t^*-\gamma[k-1])^2-(1/t^*-\gamma[k])^2\right]\\
&\leq  b\gamma[k]- J[k] + (1/t^*)\left[\left(J[k]-\frac{1}{\epsilon}\psi^* +Q[k]^{\top}\theta^* \right)T^*[k]- Q[k]^{\top}R^*[k] \right]  \\
&\quad + \frac{(\alpha/\epsilon)\gamma[k-1]}{2}\left[(1/t^*-\gamma[k-1])^2-(1/t^*-\gamma[k])^2\right]
\end{align*}
where the final inequality uses \eqref{eq:layer1-key} and the fact $(1/t^*)T^*[k]\geq 0$.  Taking conditional expectations given $H[k]$, observing that 
$\gamma[k-1]$ is $H[k]$-measurable (as are the virtual queues $Q[k], J[k]$)  and using the fact 
$$\expect{(T^*[k], R^*[k]) \vert H[k]}=(t^*,r^*)$$ 
from \eqref{eq:stationary-randomized} gives 
\begin{align*}
&\expect{\gamma[k]\left[\Delta[k]-\frac{T[k]}{\epsilon}\psi(\theta[k])\right] + \frac{(\alpha/\epsilon)\gamma[k-1]}{2}(\gamma[k]-\gamma[k-1])^2 \vert H[k]} \\
&\leq b\expect{\gamma[k]\vert H[k]} - J[k] + (1/t^*)\left[\left(J[k]-\frac{1}{\epsilon}\psi^* +Q[k]^{\top}\theta^* \right)t^*- Q[k]^{\top}r^* \right]  \\
&\quad + \frac{(\alpha/\epsilon)\gamma[k-1]}{2}\expect{(1/t^*-\gamma[k-1])^2-(1/t^*-\gamma[k])^2 \vert H[k]}\\
&= b\expect{\gamma[k]\vert H[k]} - \frac{1}{\epsilon}\psi^* + \frac{(\alpha/\epsilon)\gamma[k-1]}{2}\expect{(1/t^*-\gamma[k-1])^2-(1/t^*-\gamma[k])^2 \vert H[k]}
\end{align*}
where the final equality uses $r^*/t^*=\theta^*$.  Dividing both sides by $\gamma[k-1]$ gives
\begin{align*}
&\expect{\frac{\gamma[k]}{\gamma[k-1]}\left[\Delta[k]-\frac{T[k]}{\epsilon}\psi(\theta[k])\right] + \frac{(\alpha/\epsilon)}{2}(\gamma[k]-\gamma[k-1])^2 \vert H[k]} \\
&\leq b\expect{\frac{\gamma[k]}{\gamma[k-1]}\vert H[k]} - \frac{\psi^*}{\epsilon \gamma[k-1]} + \frac{(\alpha/\epsilon)}{2}\expect{(1/t^*-\gamma[k-1])^2-(1/t^*-\gamma[k])^2 \vert H[k]}
\end{align*}
We have divided by $\gamma[k-1]$, rather than $\gamma[k]$, because $\gamma[k-1]$ is $H[k]$-measurable. Thus, we can freely pull $\gamma[k-1]$ out of the conditional expectations. We have premultiplied the prox term by $\gamma[k-1]$ in anticipation of dividing by $\gamma[k-1]$. 
The above inequality is the heart of the theorem. The rest is simply rearranging terms. 

Multiplying both sides by $\epsilon$ gives 
\begin{align*}
&\expect{\frac{\gamma[k]}{\gamma[k-1]}\left[\epsilon\Delta[k]-T[k]\psi(\theta[k])\right] + \frac{\alpha}{2}(\gamma[k]-\gamma[k-1])^2 \vert H[k]} \\
&\leq \epsilon b\expect{\frac{\gamma[k]}{\gamma[k-1]}\vert H[k]} - \frac{\psi^*}{\gamma[k-1]} + \frac{\alpha}{2}\expect{(1/t^*-\gamma[k-1])^2-(1/t^*-\gamma[k])^2 \vert H[k]}
\end{align*}
Rearranging terms by adding and subtracting gives
\begin{align*}
&\expect{\epsilon\Delta[k]-T[k]\psi(\theta[k])  \vert H[k]} \\
&\leq \epsilon b - \psi^*\expect{\frac{1}{\gamma[k]}\vert H[k]} + \frac{\alpha}{2}\expect{(1/t^*-\gamma[k-1])^2-(1/t^*-\gamma[k])^2 \vert H[k]}\\
&\quad +\epsilon b\expect{\frac{\gamma[k]-\gamma[k-1]}{\gamma[k-1]}\vert H[k]} \\ 
&\quad + \psi^*\expect{\frac{1}{\gamma[k]} - \frac{1}{\gamma[k-1]}\vert H[k]} \\
&\quad + \expect{\frac{(\gamma[k-1]-\gamma[k])}{\gamma[k-1]}\left[\epsilon \Delta[k] - T[k]\psi(\theta[k])\right] \vert H[k]} \\
&\quad -\expect{\frac{\alpha}{2}(\gamma[k-1]-\gamma[k])^2 \vert H[k]} 
\end{align*}
Define 
\begin{align}
z[k] &= \epsilon b\frac{(\gamma[k]-\gamma[k-1])}{\gamma[k-1]} + \psi^*\left(\frac{1}{\gamma[k]} - \frac{1}{\gamma[k-1]}\right)\nonumber\\
&\quad + \frac{(\gamma[k-1]-\gamma[k])}{\gamma[k-1]}\left[\epsilon \Delta[k] - T[k]\psi(\theta[k])\right]\nonumber -\frac{\alpha}{2}(\gamma[k-1]-\gamma[k])^2 \nonumber
\end{align} 
Then 
\begin{align*}
&\expect{\epsilon\Delta[k]-T[k]\psi(\theta[k])  \vert H[k]} \\
&\leq \epsilon b - \psi^*\expect{\frac{1}{\gamma[k]}\vert H[k]} + \frac{\alpha}{2}\expect{(1/t^*-\gamma[k-1])^2-(1/t^*-\gamma[k])^2 \vert H[k]}+\expect{z[k]\vert H[k]} 
\end{align*}
Now 
\begin{align*}
z[k] &= (\gamma[k-1]-\gamma[k]) \underbrace{\left[\frac{-\epsilon b}{\gamma[k-1]} + \frac{\psi^*}{\gamma[k]\gamma[k-1]} + \frac{\epsilon \Delta[k] - T[k]\psi(\theta[k])}{\gamma[k-1]} \right]}_{w[k]} - \frac{\alpha}{2}(\gamma[k-1]-\gamma[k])^2\\
&\leq \frac{w[k]^2}{2\alpha}
\end{align*}
where the final inequality holds by completing the square: 
$$ xw[k] - \frac{\alpha}{2}x^2 \leq \frac{w[k]^2}{2\alpha} \quad \forall x \in \mathbb{R}$$
Thus
\begin{align*}
 &\expect{\epsilon\Delta[k]-T[k]\psi(\theta[k])  \vert H[k]} \\
&\leq \epsilon b - \psi^*\expect{\frac{1}{\gamma[k]}\vert H[k]} + \frac{\alpha}{2}\expect{(1/t^*-\gamma[k-1])^2-(1/t^*-\gamma[k])^2 \vert H[k]}-\frac{1}{2\alpha}\expect{w[k]^2 \vert H[k]} 
\end{align*}
Taking expectations and using the law of iterated expectations gives the result. 
\end{proof}

\subsection{Performance over a block of tasks} 

Now suppose we have a block of tasks $\{k_0, ..., k_0+m-1\}$ for some 
positive integers $k_0, m$, where $k_0$ is the starting task and $m$ is the number of tasks in the block. For simplicity of notation, define 
\begin{align*}
T_{av}&= \frac{1}{m}\sum_{k=k_0}^{k_0+m-1}T[k]\\
R_{av} &= \frac{1}{m}\sum_{k=k_0}^{k_0+m-1}R[k]
\end{align*}

\begin{thm} Fix parameters $\epsilon>0$ and $\alpha = 1/\epsilon$.  Fix initial conditions 
according to \eqref{eq:init1}-\eqref{eq:init3}. 
Suppose the algorithm of the previous section 
is run over all tasks $k \in \{1, 2, 3, ...\}$. Suppose that task matrices $M[k]$ are i.i.d. over a block $\{k_0, ..., k_0+m-1\}$, with distribution during this block that is associated with values $t^*, r^*, \theta^*=r^*/t^*, \psi^*$  in \eqref{eq:distribution-star}. Then 
\begin{align*}
\psi\left(\frac{\expect{R_{av}}}{\expect{T_{av}}}\right)- \psi^* &\geq \frac{ -\frac{\epsilon \expect{L[k_0]}}{m} - \epsilon b - \frac{\alpha}{2m}\expect{(1/t^*-\gamma[k_0+m-1])^2}  - \frac{w_{max}^2}{2\alpha}  - \frac{|\psi^*|}{m}\expect{|J[k_0+m]-J[k_0]|}}{t_{min}}\\
&\quad  - \frac{\frac{1}{m}\expect{\beta^{\top}Q[k_0+m]}}{t_{min}}
\end{align*}
where $\beta$ is defined in \eqref{eq:bounded-subgradient2};  $w_{max}\geq 0$ is a worst-case bound on $|w[k]|$ to be computed in the next subsection, where $w[k]$ is defined in \eqref{eq:wk}.  When $m\geq 1/\epsilon^2$, the next subsection shows  the absolute value of the error terms on the right-hand-side of the above inequality is at most  $\Theta(\epsilon)$, regardless of $k_0$ and regardless of the probability law or sample path associated with $\{M[k]\}$ for $k<k_0$. Thus 
$$ \psi\left(\frac{\expect{R_{av}}}{\expect{T_{av}}}\right)- \psi^* \geq -\Theta(\epsilon) \quad \mbox{when $m\geq 1/\epsilon^2$}$$
so the asymptotic adaptation time is  $\Theta(1/\epsilon^2)$, which is optimal. 
\end{thm}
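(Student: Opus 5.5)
The plan is to sum the per-task bound \eqref{eq:DPP} from the previous theorem over $k\in\{k_0,\dots,k_0+m-1\}$ and then convert the result into a statement about $\psi$ of the ratio of expected averages. Within the block each $M[k]$ is independent of $H[k]$ and has the same distribution, so \eqref{eq:DPP} holds for every such $k$ with the same $(t^*,r^*,\theta^*,\psi^*)$. Summing and dividing by $m$ gives the following:
\begin{itemize}
\item the drift terms telescope to $\frac{\epsilon}{m}\expect{L[k_0+m]-L[k_0]}\geq -\frac{\epsilon}{m}\expect{L[k_0]}$, since $L\geq 0$;
\item the prox terms telescope to $\frac{\alpha}{2m}\expect{(1/t^*-\gamma[k_0-1])^2-(1/t^*-\gamma[k_0+m-1])^2}$;
\item each $\expect{w[k]^2}/(2\alpha)$ is at most $w_{max}^2/(2\alpha)$.
\end{itemize}
I expect the telescoped prox term to give the $\gamma$-dependent error term appearing in the statement. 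It has to be handled carefully: the piece at $k_0-1$ is bounded by $(\gamma_{max}-\gamma_{min})^2$, while the piece at the end of the block is kept as written. Whether the signs match the statement exactly is a bookkeeping issue I would sort out at this point.

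Next I replace $-\psi^*\frac1m\sum\expect{1/\gamma[k]}$ by $-\psi^*\expect{T_{av}}$. By the $J$ update \eqref{eq:j-update}, $\sum_{k}(T[k]-1/\gamma[k]) = J[k_0+m]-J[k_0]$. This gives
$$-\psi^*\frac{1}{m}\sum_k\expect{1/\gamma[k]} \le -\psi^*\expect{T_{av}} + \frac{|\psi^*|}{m}\expect{|J[k_0+m]-J[k_0]|}.$$
The left side of the summed inequality then contains $-\frac1m\sum_k\expect{T[k]\psi(\theta[k])}$.

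To turn this into a bound on $\psi(\expect{R_{av}}/\expect{T_{av}})$, I apply the Jensen variant (the first lemma) with the index $k$ drawn uniformly from the block and jointly over $\omega$, taking the weight to be $T[k]$ and the point to be $X=\theta[k]$. This gives
$$\frac{\frac1m\sum\expect{T[k]\psi(\theta[k])}}{\expect{T_{av}}}\le \psi\!\left(\frac{\frac1m\sum\expect{T[k]\theta[k]}}{\expect{T_{av}}}\right).$$
Then I use the $Q$ update \eqref{eq:q-update}, which gives $Q_i[k+1]\ge Q_i[k]+\theta_i[k]T[k]-R_i[k]$. Summing, and using $Q\geq 0$, yields
$$\frac1m\sum\expect{T[k]\theta[k]}\le \expect{R_{av}}+\frac1m\expect{Q[k_0+m]}.$$
Write $u=\expect{R_{av}}/\expect{T_{av}}$ and $v$ for the Jensen argument. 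Since $\psi$ is nondecreasing and has the bounded subgradient \eqref{eq:bounded-subgradient1}-\eqref{eq:bounded-subgradient2}, I can bound $\psi(v)\le\psi(u)+\psi'(u)^\top(v-u)_+\le \psi(u)+\beta^\top(v-u)_+$. Together with $\expect{T_{av}}\ge t_{min}$, this gives
$$\psi(v)\le \psi(u)+\frac{\expect{\beta^\top Q[k_0+m]}}{m\,t_{min}}.$$
One caveat is that $v$ may need clipping into the box $[\theta_{min},\theta_{max}]$; monotonicity handles this. Combining everything, dividing by $\expect{T_{av}}\ge t_{min}$, and moving $\psi^*$ to the left yields the displayed inequality.

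I expect the main obstacle to be the order-of-magnitude claim, which requires deterministic bounds. The target is that $Q_i[k]$, $|J[k]|$, $L[k]$ and $w_{max}$ are all $O(1/\epsilon)$; with $\alpha=1/\epsilon$ and $m\ge1/\epsilon^2$, every term in the inequality is then $O(\epsilon)$.
\begin{itemize}
\item For $Q$: once $Q_i[k]>\beta_i/\epsilon$, Layer 1 forces $\theta_i[k]=\theta_{i,min}$, because the subgradient is at most $\beta_i$. Then $\theta_iT-R_i\le0$ and $Q_i$ stops growing, so $Q_i\le\beta_i/\epsilon+O(1)$.
\item For $J$: when $J$ is large, $c[k]$ is large, so $d[k]$ is large and Layer 3 drives $\gamma$ to $\gamma_{min}$, making $T-1/\gamma\le0$. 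The symmetric argument handles large negative $J$. The catch is that the Layer 3 step size $d[k]\epsilon/\gamma[k-1]$ is $O(1)$ only when $d=O(1/\epsilon)$. This self-consistency argument is the delicate part.
\item For $w$: $|\epsilon\Delta[k]|$ is $O(1)$ because $\Delta[k]$ is $O(1/\epsilon)$ when the queues are $O(1/\epsilon)$, so $w_{max}=O(1)$.
\end{itemize}
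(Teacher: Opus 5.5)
Your proposal follows the paper's proof essentially step for step. You sum \eqref{eq:DPP} over the block and telescope, use the $J$ update to trade $\frac{1}{m}\sum_k\expect{1/\gamma[k]}$ for $\expect{T_{av}}$, apply the Jensen variant with weights $T[k]$ (your uniform-index construction is what justifies it), and finish with the $Q$ update and the bounded-subgradient inequality. You apply that inequality directly between $u$ and $v$, which both lie in the box because $v$ is a $T[k]$-weighted average of the $\theta[k]$, so no clipping is needed; this is slightly cleaner than the paper, which evaluates $\psi$ at $u+h/\expect{T_{av}}$, a point that may fall outside the domain.

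On your open bookkeeping point, the end-of-block prox term enters with a favorable sign and is simply dropped. What remains is $-\frac{\alpha}{2m}\expect{(1/t^*-\gamma[k_0-1])^2}$, which is what the paper's proof actually produces; the index $k_0+m-1$ in the statement appears to be a typo. This is harmless, since for $m\geq 1/\epsilon^2$ either term is at most $\frac{\alpha}{2m}(\gamma_{max}-\gamma_{min})^2\leq\frac{\epsilon}{2}(\gamma_{max}-\gamma_{min})^2$.

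In your boundedness sketch, with $\alpha=1/\epsilon$ the Layer 3 step is $\epsilon^2 d[k]/\gamma[k-1]$, not $\epsilon d[k]/\gamma[k-1]$. So once $d[k]=\Theta(1/\epsilon)$, $\gamma$ needs $\Theta(1/\epsilon)$ tasks to reach $\gamma_{min}$, which still gives $|J[k]|=O(1/\epsilon)$. Also, $L[k]=O(1/\epsilon^2)$, and $w_{max}$ must be $O(1)$ (as your last bullet says), not $O(1/\epsilon)$.
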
 

\begin{proof} 
From \eqref{eq:DPP}, we have for all $k \in \{k_0, ..., k_0+m-1\}$: 
\begin{align*}
 &\expect{\epsilon\Delta[k]-T[k]\psi(\theta[k])}\nonumber \\
&\leq \epsilon b - \psi^*\expect{\frac{1}{\gamma[k]}} + \frac{\alpha}{2}\expect{(1/t^*-\gamma[k-1])^2-(1/t^*-\gamma[k])^2}+\frac{1}{2\alpha}w_{max}^2
\end{align*}
Summing over $k \in \{k_0, ..., k_0+m-1\}$ gives
\begin{align*}
&\epsilon\expect{L[k_0+m]-L[k_0]} - \sum_{k=k_0}^{k_0+m-1}\expect{T[k]\psi(\theta[k])} \\
&\leq \epsilon bm - \psi^*\sum_{k=k_0}^{k_0+m-1} \expect{\frac{1}{\gamma[k]}} + \frac{\alpha}{2}\expect{(1/t^*-\gamma[k_0-1])^2 -(1/t^*-\gamma[k_0+m-1])^2} + \frac{mw_{max}^2}{2\alpha}
\end{align*}
Dividing by $m$ and rearranging terms gives 
\begin{align*}
&\frac{1}{m}\sum_{k=k_0}^{k_0+m-1} \expect{T[k]\psi(\theta[k])} - \psi^*\cdot \left(\frac{1}{m}\sum_{k=k_0}^{k_0+m-1}\expect{T[k]}\right) \\
&\geq -\epsilon b + \frac{\epsilon}{m}\expect{L[k_0+m]-L[k_0]} \\
&\quad - \psi^*\cdot \frac{1}{m}\sum_{k=k_0}^{k_0+m-1}\expect{T[k]-1/\gamma[k]} \\
&\quad - \frac{\alpha}{2m}\expect{(1/t^*-\gamma[k_0-1])^2 -(1/t^*-\gamma[k_0+m-1])^2}  - \frac{w_{max}^2}{2\alpha} \\
&\overset{(a)}{=}  -\epsilon b + \frac{\epsilon}{m}\expect{L[k_0+m]-L[k_0]} \\
&\quad - \psi^*\cdot \frac{1}{m}\expect{J[k_0+m]-J[k_0]} \\
&\quad - \frac{\alpha}{2m}\expect{(1/t^*-\gamma[k_0-1])^2 -(1/t^*-\gamma[k_0+m-1])^2}  - \frac{w_{max}^2}{2\alpha} \\
&\geq  -\epsilon b - \frac{\epsilon}{m}\expect{L[k_0]} \\
&\quad -|\psi^*|\cdot \frac{1}{m}\expect{|J[k_0+m]-J[k_0]|} \\
&\quad - \frac{\alpha}{2m}\expect{(1/t^*-\gamma[k_0-1])^2}  - \frac{w_{max}^2}{2\alpha} \\
\end{align*}
where equality (a) uses the virtual queue update equation \eqref{eq:j-update}. 
Dividing both sides by $\frac{1}{m}\sum_{k=k_0}^{k_0+m-1}\expect{T[k]}$ and using the fact that this expression is at most $t_{min}$ 
gives 
\begin{align*}
&\frac{\frac{1}{m}\sum_{k=k_0}^{k_0+m-1} \expect{T[k]\psi(\theta[k])}}{\frac{1}{m}\sum_{k=k_0}^{k_0+m-1}\expect{T[k]}}  - \psi^* \\
&\geq \frac{  -\epsilon b - \frac{\epsilon}{m}\expect{L[k_0]} -|\psi^*|\cdot \frac{1}{m}\expect{|J[k_0+m]-J[k_0]|}  - \frac{\alpha}{2m}\expect{(1/t^*-\gamma[k_0-1])^2}  - \frac{w_{max}^2}{2\alpha}}{t_{min}}
\end{align*}
Using the variation on Jensen's inequality for the concave function $\psi$ gives: 
\begin{equation} \label{eq:temp-variation}
\frac{\frac{1}{m}\sum_{k=k_0}^{k_0+m-1} \expect{T[k]\psi(\theta[k])}}{\frac{1}{m}\sum_{k=k_0}^{k_0+m-1}\expect{T[k]}}  \leq \psi\left(\frac{\frac{1}{m}\sum_{k=k_0}^{k_0+m-1} \expect{T[k]\theta[k]}}{\frac{1}{m}\sum_{k=k_0}^{k_0+m-1}\expect{T[k]}}\right) = \psi\left(\frac{g}{\expect{T_{av}}}\right) 
\end{equation} 
where we define the constant $g$ by 
$$ g = \frac{1}{m}\sum_{k=k_0}^{k_0+m-1} \expect{T[k]\theta[k]}$$
However, from \eqref{eq:q-update} we obtain for all $i \in \{1, ..., n\}$
$$ Q_i[k+1]\geq Q_i[k] + \theta_i[k]T[k]-R_i[k]$$ 
and in vector form this gives
\begin{equation} \label{eq:vector-form}
Q[k+1]-Q[k] \geq \theta[k]T[k]-R[k]
\end{equation} 
Thus
\begin{align*}
g&=\frac{1}{m}\sum_{k=k_0}^{k_0+m-1} \expect{T[k]\theta[k]} \\
&=\frac{1}{m}\sum_{k=k_0}^{k_0+m-1} \expect{R[k] + (\theta[k]T[k]-R[k])}\\
&\overset{(a)}{\leq} \frac{1}{m}\sum_{k=k_0}^{k_0+m-1} \expect{R[k] + (Q[k+1]-Q[k])}\\
&= \expect{R_{av}} + \frac{1}{m}\expect{Q[k_0+m]-Q[k_0]}\\
&\leq \expect{R_{av}} + h
\end{align*}
where (a) holds by \eqref{eq:vector-form}; we define the nonnegative constant $h$ by 
$$ h =  \frac{1}{m}\expect{Q[k_0+m]}$$
 Thus, since $\psi$ is entrywise nondecreasing, 
\begin{align*}
\psi\left(\frac{g}{\expect{T_{av}}}\right) &\leq \psi\left(\frac{\expect{R_{av}}}{\expect{T_{av}}} +\frac{h}{\expect{T_{av}}}  \right)\\
&\leq  \psi\left(\frac{\expect{R_{av}}}{\expect{T_{av}}}\right) + \beta^{\top}\frac{h}{\expect{T_{av}}}
\end{align*}
where we have used \eqref{eq:bounded-subgradient1}-\eqref{eq:bounded-subgradient2}. Substituting this into \eqref{eq:temp-variation} proves the result. 
\end{proof} 

\subsection{Bounding the error terms} 

An argument similar to that of \cite{hao-harvesting-ton2019} 
can show that all virtual queues are bounded by a constant of size $\theta(1/\epsilon)$ for all $k \in \{1, 2, 3, ...\}$  (proof omitted for brevity). 
Thus, for $m\geq 1/\epsilon^2$, $\alpha = 1/\epsilon$, and all $k_0$ we have 
\begin{align*}
&\frac{\epsilon}{m} L[k_0] \leq \epsilon^3 L[k_0] \leq \epsilon^3\Theta(\frac{1}{\epsilon^2}) = \Theta(\epsilon) \\
&\frac{1}{m}|J[k_0+m]-J[k_0]| \leq \epsilon^2\Theta(1/\epsilon) = \Theta(\epsilon) \\
&\frac{\alpha}{m} \leq (1/\epsilon)/\epsilon^2 = \epsilon \\
&\frac{1}{m}\beta^{\top}Q[k_0+m] \leq \epsilon^2\Theta(1/\epsilon) = \Theta(\epsilon) \\
&\frac{w[k]^2}{2\alpha} = \frac{\epsilon w[k]^2}{2}
\end{align*}
It suffices to prove $|w[k]|\leq w_{max}= \Theta(1)$.  By definition of $w[k]$ in \eqref{eq:wk}, it is clear we simply need to prove that
$$ |\epsilon \Delta[k] | \leq \Theta(1) \quad \forall k $$
We have 
\begin{align*} 
|\Delta[k]| &= |L[k+1]-L[k]| \\
&= \frac{1}{2}\left|J[k+1]^2-J[k]^2 + \norm{Q[k+1]}^2-\norm{Q[k]}^2\right|\\
&= \frac{1}{2}\left|\underbrace{(J[k+1]-J[k])}_{O(1)}\underbrace{(J[k+1]+J[k])}_{O(1/\epsilon)} + \underbrace{(Q[k+1]-Q[k])^{\top}}_{O(1)}\underbrace{(Q[k+1]+Q[k])}_{O(1/\epsilon)}\right|\\
&\leq \Theta(1/\epsilon)
\end{align*}
and so $|\epsilon \Delta[k]|\leq \Theta(1)$. 

\section{Conclusion} 

This work considers utility maximization in a multi-objective renewal optimization problem. The problem involves fractional objectives inside a concave utility function. The goal is to develop an online algorithm that does not know the probability distribution, but converges quickly to optimal utility. 
It achieves an adaptation time of $O(1/\epsilon^2)$, which matches a known converse bound from prior work \cite{neely-renewal-jmlr}. Hence, it provides optimal adaptation time.  
The algorithm uses a novel layered optimization framework that enables fast adaptation.  This can be viewed 
as a utility-based extension 
of our recent work \cite{neely-adaptive-renewals} which treats minimization of one renewal process subject to time average constraints.  The utility-based extension of the current paper yields naturally bounded virtual queues (without the truncation procedure of  \cite{neely-adaptive-renewals}), which simplifies the adaptation time analysis. 

 % ------------------------------------------------------------------------
%GATHER{Xbib.bib}   % For Gather Purpose Only
%GATHER{Paper.bbl}  % For Gather Purpose Only
\bibliographystyle{unsrt}
\bibliography{../../../latex-mit/bibliography/refs}
\end{document}